\newtheorem{thm}{Theorem}[section]
\newtheorem*{theorem*}{Theorem}
\newtheorem*{acknowledgement*}{Acknowledgement}
\newtheorem{cor}[thm]{Corollary}
\newtheorem{claim}[thm]{Claim}
\newtheorem{lem}[thm]{Lemma}
\newtheorem{prop}[thm]{Proposition}
\theoremstyle{definition}
\newtheorem{defn}[thm]{Definition}
\theoremstyle{remark}
\newtheorem{rem}[thm]{Remark}
\numberwithin{equation}{section}
\newcommand{\func}[1]{\ensuremath{\mathop{\mathrm{#1}}} }
\newcommand{\spt}[0]{\func{spt}}
\newcommand{\e}{\varepsilon}
\author{Huy The Nguyen}
\address{School of Mathematical Sciences\\
	Queen Mary University of London\\
	Mile End Road\\
	London E1 4NS}
\email{h.nguyen@qmul.ac.uk}
\author{Shengwen Wang} \address{Mathematics Institute\\
	University of Warwick\\
	Gibbet Hill Road\\
	Coventry CV4 7AL}\email{shengwen.wang@warwick.ac.uk}
\begin{document}
\title[Parabolic Allen--Cahn]{Second order estimates for transition layers and a curvature estimate for the parabolic Allen--Cahn}
\maketitle
\begin{abstract}
The parabolic Allen--Cahn equation is a semilinear partial differential equation that is closely linked to the mean curvature flow by a singular perturbation. Motivated by the work of Wang--Wei \cite{Wang2019a} and Chodosh--Mantoulidis \cite{Chodosh2020} in the elliptic setting, we initiate the corresponding regularity theory for parabolic Allen--Cahn flows. In particular, we establish an improved convergence property of parabolic Allen--Cahn flows to the mean curvature flow: if the phase-transition level sets converge in $C^2$, then they converge in $C^{2,\theta}$ as well. As an application, we obtain a curvature estimate for the parabolic Allen--Cahn equation, which can be seen as a diffused version of Brakke's \cite{brakke2015motion} and White's \cite{white2005local} regularity theorems for mean curvature flow.
\end{abstract}

\section{Introduction}

The parabolic Allen--Cahn equation
	\begin{align}\label{ACF}
	\frac{\partial}{\partial t}u=\Delta u-W'(u)
	\end{align}
is an evolution equation that models the reaction-diffusion dynamics of phase transition. It is the gradient flow of the Allen--Cahn phase separation energy
\begin{equation*}
E(u)=\int\frac{1}{2}|\nabla u|^2+W(u)
\end{equation*}
 where $W(u):\mathbb R\rightarrow\mathbb R$ is a double-well shaped potential function.

Geometrically, the Allen--Cahn equation has a close relationship with mean curvature flow through its singularly perturbed version
 	\begin{align}\label{AFCEpsilon}
	\frac{\partial}{\partial t}u_\e=\Delta u_\e-\frac{W'(u_\e)}{\e^2}.
	\end{align}
The two equations are related by the parabolic scaling $u_\e(x,t)=u(\frac{x}{\e},\frac{t}{\e^2})$. In particular, equation is not scale invariant but $u_\e$ satisfies an $\e$-equation of the same form but with a different parameter. It was shown by Ilmanen \cite{Ilmanen1993} as the parameter $\e\rightarrow0$, the energy measure
	\begin{align*}
	d\mu_\e(u)=\left[\frac{\e|\nabla u_\e|^2}{2}+\frac{W(u_\e)}{\e}\right] dx
	\end{align*} of the $\e$-solution converges in the sense of varifolds to Brakke's weak mean curvature flow. Moreover, the limit Brakke flow has integer multiplicity a.e. by Tonegawa \cite{tonegawa2003integrality}. Hence the parabolic Allen--Cahn is a model for the flow of mean curvature flow through singularities. In particular, note the equation is a subcritical semilinear equation and hence does not form singularities as $t\rightarrow \infty$. This property makes the flow an appealing candidate for weak mean curvature flow.

For geometric applications, it is necessary to obtain higher regularity for the convergence. In the elliptic setting, Caffarelli--Cordoba \cite{caffarelli2006phase} showed the transition layers of phase transitions have uniform $C^{1,\theta}$ regularity (depending only upon the Lipschitz norm and independent of $\e$). Wang--Wei \cite{Wang2019a, Wang2019} proved stable transition layers converges in a stronger $C^{2,\theta}$ sense to the limit minimal surfaces in dimensions $n\leq10$. Using an improvement of the convergence in dimension $3$, Chodosh-Mantoulidis \cite{Chodosh2020} proved that the min-max minimal surfaces obtained from the Allen--Cahn construction by Guaraco \cite{Guaraco2018} in a generic 3-manifold has multiplicity $1$ and expected index. This gives an alternative proof of Yau's conjecture of existence of infinitely many minimal surfaces. These results differ from the methods used in \cite{Ilmanen1993}, \cite{tonegawa2003integrality}. They do not use geometric measure theoretic techniques, but instead uses an infinite dimensional Lyapunov--Schmidt reduction developed in \cite{Pino2011} and \cite{Pino2013}.

Motivated by the work of \cite{Wang2019a, Wang2019} and \cite{Chodosh2020} in the elliptic setting, we initiate the corresponding regularity theory for parabolic Allen--Cahn. In particular, for low entropy parabolic Allen--Cahn flow, we have an improved convergence of their transition layers to mean curvature flow. The motivation in the elliptic setting was minimal surfaces, in particular a proof Yau's conjecture \cite{Chodosh2020}, in the parabolic setting, the corresponding problem is the multiplicity $1$ conjecture for mean curvature flow by Ilmanen. It is expected the parabolic Allen--Cahn equation and its improved convergence properties will have applications in understanding mean curvature flow and its singularities. The key idea in this paper is a parabolic analogue of the Lyapunov--Schmidt reduction, \eqref{eqn_best_approx}. In the parabolic case, this was first used in \cite{Pino2018} and \cite{Pino2018a}.

Our main result is the following theorem, which improves the regularity of level sets
\begin{thm}\label{ImprovementRegularity}
For any $\delta>0$, let $u_\e$ be a sequence of solutions to \eqref{AFCEpsilon} with $\e\rightarrow0$ in a space-time open set $B_2(0)\times[-2,2]\subset\mathbb R^{n+1}\times\mathbb R$ such that $\{u_\e(\cdot, t)=0\}\cap B_1(0)\neq\emptyset$ for all $t\in[-2,2]$. Furthermore let us assume that the entropy (see Definition \ref{DefinitionEntropy}) satisfies $\lambda(d\mu_\e)<2\alpha-\delta$ where $d\mu_\e$ is the energy measure of $u_\e$ (see \eqref{EnergyMeasureExpression}), and the enhanced second fundamental form is uniformly bounded by $\mathcal A(u_\e)\leq C$ (see section 2 for the definition), where $C$ is a uniform constant independent of $\e$.

Then the nodal sets $\{u_\e=0\}$ converge in the parabolic $C^{2,\theta}$ sense to a smooth mean curvature flow in $B_1(0)\times[-1,1]\subset\mathbb R^{n+1}\times\mathbb R$, for any $\theta\in(0,1)$.

In particular, the spatial $C^\theta$ H\"older norm of the second fundamental form of the nodal sets and the $C^{1,\frac{\theta}{2}}$ norm of the time derivatives are uniformly bounded on compact subsets.
\end{thm}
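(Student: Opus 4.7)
The plan is to carry out a parabolic Lyapunov--Schmidt reduction in the spirit of Wang--Wei \cite{Wang2019a} and Chodosh--Mantoulidis \cite{Chodosh2018}, using the time-dependent best approximation introduced in the parabolic setting by del Pino et al.\ \cite{Pino2018,Pino2018a}. The entropy bound $\lambda_\e(u)<2\alpha-\delta$ rules out multi-sheeted behaviour so that $u_\e$ is a perturbation of a single one-dimensional heteroclinic near the nodal set $\G_t=\{u_\e(\cdot,t)=0\}$, while the enhanced second fundamental form bound $\mathcal{A}(u_\e)\le C$ makes Fermi coordinates uniformly well defined in a tubular neighbourhood of $\G_t$ of $\e$-independent width.

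First I would set up Fermi coordinates $(y,r)$ with $y\in\G_t$ and $r$ the signed distance to $\G_t$, then define the best approximation by choosing a scalar shift $h(y,t)$ on the space--time track of $\G_t$ so that
\begin{equation*}
u_\e(x,t)=\mathbb{H}\!\left(\tfrac{r-\e h(y,t)}{\e}\right)+\phi(y,r,t),
\end{equation*}
where $\mathbb{H}$ is the standing heteroclinic profile and $\phi$ is $L^2_r$-orthogonal to $\mathbb{H}'\bigl((r-\e h)/\e\bigr)$ along every normal fibre; this is the orthogonal decomposition referenced in the introduction. Substituting this ansatz into \eqref{AFCEpsilon} and projecting onto $\mathbb{H}'$ along each fibre and onto its orthogonal complement gives a coupled system: a parabolic Jacobi-type equation on $\G_t$ of the schematic form
\begin{equation*}
\partial_t h-\Delta_{\G_t}h-|A_{\G_t}|^2 h=\tfrac{1}{\e}(H_{\G_t}-V_{\G_t})+\mathcal{N}(\phi,h,\nabla\phi,\ldots),
\end{equation*}
where $V_{\G_t}$ is the normal velocity of $\G_t$ and $\mathcal{N}$ is at least quadratically small, plus an equation for $\phi$ driven by a linear operator whose spatial part has a uniform spectral gap of size $\e^{-2}$.

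With this reduction in hand, I would use the spectral gap of $-\partial_{rr}+W''(\mathbb{H})$ together with parabolic energy estimates in Fermi coordinates to prove quantitative smallness $\|\phi\|_{C^2_{\mathrm{par}}}\le C\e^\theta$. Substituting back into the equation for $h$ turns the right-hand side into a Hölder-small source, and a parabolic Schauder bootstrap on $\G_t$ then upgrades $h$ to $C^{2,\theta}$ in space and $C^{1,\theta/2}$ in time; translating through the Fermi chart delivers the claimed $C^{2,\theta}$ convergence and Hölder control on the second fundamental form and its time derivatives. The main obstacle is the parabolic modification of the reduction itself: the orthogonality condition defining $\phi$ must be preserved dynamically, so $\partial_t$ picks up contributions from the time-evolution of the Fermi frame and of $h$, and one must show that these contributions can be absorbed into the quadratic remainder $\mathcal{N}$ without destroying the $\e^{-2}$ spectral gap or enlarging the Jacobi equation beyond perturbative control. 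This is where the parabolic argument genuinely diverges from its elliptic counterpart, and where the entropy hypothesis and curvature bound must be used most carefully to close the estimates uniformly in $\e$.
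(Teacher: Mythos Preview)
Your overall architecture---Fermi coordinates, a one-sheeted ansatz with shift $h$, orthogonality in the normal fibre, projection onto $\mathbb{H}'$ to obtain a Jacobi-type identity, and parabolic Schauder---is exactly the route the paper takes. The low-entropy hypothesis is used as you say, to force a single transition layer and avoid a Toda system.

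Where your outline diverges from the paper is in the \emph{direction} in which you read the projected equation, and this creates a genuine circularity. You write the reduced equation on $\Gamma_t$ with $\tfrac{1}{\e}(H_{\Gamma_t}-V_{\Gamma_t})$ as a source and propose to run Schauder on it to upgrade $h$. But $\|H-V\|_{C^\theta}$ is the unknown of the problem (it is equivalent to the $C^{2,\theta}$ control on the nodal set that we are after), so it cannot sit on the right-hand side of a Schauder estimate for $h$. In the paper the logic is inverted: after rescaling to unit scale, the projection identity is read as
\[
\partial_t h-\Delta_{\Gamma_{0,t}}h+H_{\Gamma_{0,t}}+V_{\Gamma_{0,t}}=O(\e^2)+O(\|\phi\|_{C^{2,\theta}}^2),
\]
so the projected equation \emph{outputs} a bound on the MCF defect $H+V$ rather than consuming one. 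Crucially, $h$ itself is never estimated by Schauder on this equation; instead $\|h\|_{C^{k,\theta}}\le C\|\phi\|_{C^{k,\theta}}$ comes directly from differentiating the orthogonality condition (an implicit-function-type bound). The Schauder step is applied to the $\phi$-equation $(\partial_t-\Delta+W''(\bar g))\phi=\bar g'\cdot(\text{MCF defect})+\text{quadratic}$, and one then iterates the two inequalities to get $\|\phi\|_{C^{2,\theta}}+\|H+V\|_{C^\theta}=O(\e^2)$. Only at the very end does a Schauder estimate on a hypersurface appear, and it is applied to the graph function $f_\e$ of the nodal set over the limit flow (using the smallness of $H+V$ as source), not to $h$. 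If you reorganize your argument so that the projection bounds the MCF defect, $h$ is bounded by $\phi$ via orthogonality, and Schauder is run on $\phi$ and then on $f_\e$, it will close.
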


\begin{rem}
This theorem is the parabolic analogue of Theorem 1 in \cite{Wang2019}, where the stability condition in the elliptic setting is replaced by a low entropy condition. We note the low entropy condition ensures we only have one transition layer which substantially simplifies the analysis. In particular, we are not required to model interactions between separate layers and hence do not need Toda systems.

The condition in the theorem implicitly implies the limit mean curvature flow is smooth, because $C^2$ bounds imply $C^{1,\theta}$ convergence of the transition layers, and standard regularity theory for quasilinear parabolic partial differential equations allows us to bootstrap $C^{1,\theta}$ bounds to $C^\infty$ smoothness of the limit flow.
\end{rem}

\begin{thm}\label{Graphical}
For any $\delta>0$, suppose $u_\e$ is a solution of \eqref{AFCEpsilon} with $\e\rightarrow0$ in a space-time open set $B_2(0)\times[-2,2]\subset\mathbb R^{n+1}\times\mathbb R$ such that the entropy $\lambda(d\mu_\e)<2\alpha-\delta$, the nodal sets $\Gamma_{\e,t}=\{u_\e(x,t)=0\}$ of $u_\e$ in $B_2(0)\times[-2,2]$ can be represented by a Lipschitz graph over the limit mean curvature flow $\Sigma_t$ as
\begin{equation*}
\Gamma_{\e,t}=\mathrm{Graph}_{\Sigma_t}f_{\e,t}
\end{equation*}
with $f_{\e,t}$ having uniformly bounded Lipschitz norms in $B_2(0)\times[-2,2]$.

Then the same conclusion as in Theorem \ref{ImprovementRegularity} holds.
\end{thm}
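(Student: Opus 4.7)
The plan is to reduce Theorem~\ref{Graphical} to Theorem~\ref{ImprovementRegularity}. Under the Lipschitz graphical hypothesis and the entropy bound $\lambda_\e(u) < 2\alpha - \delta$, I will verify the two remaining hypotheses of Theorem~\ref{ImprovementRegularity}: the non-vanishing condition $\nabla_x u_\e \neq 0$ on $\{u_\e = 0\}$ and the uniform-in-$\e$ enhanced second fundamental form bound $\mathcal{A}(u_\e) \leq C$. Once these are in place, Theorem~\ref{ImprovementRegularity} applies on $B_1(0) \times [-1,1]$ to give the claimed $C^{2,\theta}$ convergence.

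The first step is to work in Fermi coordinates $(s,y)$ around the smooth limit mean curvature flow $\Sigma_t$, with $s$ the signed distance. The Lipschitz graph condition writes $\Gamma_{\e,t} = \{s = \e f_{\e,t}(y)\}$ with $f_{\e,t}$ uniformly Lipschitz, so $\Gamma_{\e,t}$ lies in a tubular neighborhood of $\Sigma_t$ of uniform size. The low entropy hypothesis rules out a second transition layer inside this tube. Together these force $u_\e$ to be well-approximated by the one-dimensional heteroclinic profile $\mathbb{H}((s - \e f_{\e,t})/\e)$, and transverse monotonicity of $\mathbb{H}$ plus smallness of the remainder yields $|\nabla_x u_\e| \gtrsim \e^{-1}\mathbb{H}'(0) > 0$ on the nodal set. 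This verifies the first hypothesis. The Lyapunov-Schmidt / best approximation decomposition \eqref{eqn_best_approx} simultaneously produces a parabolic PDE for the modulating function $f_{\e,t}$ whose principal part is the linearized mean curvature (Jacobi) operator along $\Sigma_t$.

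The second step is to upgrade the Lipschitz bound on $f_{\e,t}$ to $\e$-independent $C^{2,\theta}$ bounds by applying parabolic De Giorgi-Nash-Moser estimates to the PDE for $f_{\e,t}$, followed by interior parabolic Schauder estimates. The key ingredient is that the inhomogeneous source term in this PDE is controlled uniformly in $\e$ because of the orthogonality condition defining $f_{\e,t}$ together with the exponential decay of $\mathbb{H}'(s/\e)$ away from $\Sigma_t$. The resulting $C^{2,\theta}$ control on $f_{\e,t}$ translates directly, via the best approximation ansatz, into the enhanced second fundamental form bound $\mathcal{A}(u_\e) \leq C$ of Section~2, completing the verification.

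The main obstacle is producing the uniform-in-$\e$ Schauder bootstrap in the second step, since the source term in the effective equation for $f_{\e,t}$ naively appears to blow up as $\e \to 0$. Taming it requires exploiting the spectral gap of the linearized profile operator $-\partial_s^2 + W''(\mathbb{H})$ on the orthogonal complement of the translational kernel $\mathbb{H}'$, combined with exponentially weighted transverse estimates that absorb the $\e$-scaling. Once such weighted spectral-type estimates are available, the Schauder bootstrap and the subsequent appeal to Theorem~\ref{ImprovementRegularity} are routine.
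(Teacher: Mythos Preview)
Your reduction strategy is right and matches the paper: verify the hypotheses of Theorem~\ref{ImprovementRegularity}, chiefly the uniform bound $\mathcal A(u_\e)\le C$, and then invoke it. The divergence is in \emph{how} you obtain that bound.

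The paper does not run a direct PDE bootstrap on $f_{\e,t}$. It obtains the $\mathcal A$ bound by a blow-up argument (the parabolic version of Corollary~1.2 in \cite{Wang2019}): if $\mathcal A(u_{\e_i})\to\infty$ along some sequence, point-pick and rescale so that $\mathcal A=1$ at the origin and $\mathcal A\le 2$ on a parabolic ball of growing radius. Under this rescaling the smooth reference flow $\Sigma_t$ flattens to a static hyperplane, and the uniform Lipschitz graphicality survives with Lipschitz constant tending to~$0$. The rescaled solutions now \emph{do} satisfy the hypotheses of Theorem~\ref{ImprovementRegularity} (since $\mathcal A\le 2$ after rescaling), so its $C^{2,\theta}$ conclusion carries the value $\mathcal A(0,0)=1$ to the limit; but the limit has flat nodal sets, a contradiction. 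The same blow-up-plus-rigidity mechanism also yields the nondegeneracy $\nabla u_\e\neq 0$ (compare Section~\ref{ProofCurvature}).

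Your direct route has a genuine circularity. The best-approximation construction \eqref{eqn_best_approx} and the derivation of the effective equation in Sections~\ref{ApproximateSolution}--\ref{DerivationEquation} are carried out in Fermi coordinates about $\Gamma_{\e,t}$ and use the bound $\mathcal A_\e\le C_0\e$ throughout---for instance in \eqref{HolderSmallnessRescaled}, \eqref{CurvatureError}, \eqref{LaplacianError}---precisely to control the source terms you need. With only a Lipschitz interface you cannot form the signed distance $d_{\e,t}$ with enough regularity to compute $(\partial_t-\Delta)g^*_\e$ as in \eqref{Difference}, and even your variant built over the smooth $\Sigma_t$ forces $\nabla^2 f_{\e,t}$ into the Laplacian of the ansatz before you know it exists with $\e$-independent bounds. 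So the ``PDE for $f_{\e,t}$'' to which you would apply De~Giorgi--Nash--Moser is not available at that stage, and the Schauder bootstrap you describe as routine is exactly what the blow-up argument is designed to replace.
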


\begin{rem}
This result is similar to Corollary 1.2 in \cite{Wang2019}. In the elliptic case, if the function is rescaled to have bounded curvature, the stability of $u$ guarantees that the blow-up limit is either a one-dimensional solution to the Allen--Cahn equation in $\mathbb R^{n+1}$ or a flat hyperplane. In our case we cannot guarantee the limit is flat, hence we show the nodal sets are graphical over the limiting mean curvature flow instead of over a hyperplane as in \cite{Wang2019}. We will use this theorem in a subsequent paper to prove a Brakke-type regularity theorem for the parabolic Allen--Cahn equation \cite{Nguyen2020}.
\end{rem}

\begin{cor}\label{CurvatureEstimates}
For any $\delta>0$ there exists a $C_0>0,\e_0>0$ with the following consequence: Let $u_{\e}$ be a solution of \eqref{EAC} defined on $\mathbb R^2\times(-r^2,r^2)$, $u_{\e}(0,0)=0$. Suppose that there exists uniform $R_0, W_0,\delta_0>0$ such that the following holds:
\begin{enumerate}
\item $u_\e$ represents phase transition in $B_{R_0}$ for every $\e\leq\e_0$ and time $t\in(-r^2,r^2)$ (see Definition \ref{RepresentsTransition});
\item $\int_{\mathbb R^2}\e\left(\Delta u_\e(x,t)-\frac{W'(u_\e(x,t))}{\e^2}\right)^2dx\leq W_0, \forall t\in(-r^2,r^2)$;
\item The entropy $\lambda(d\mu_\e)\leq2\alpha-\delta$. 
\end{enumerate}
Then we must have $\nabla u_\e(0,0)\neq0$ and that the enhanced second fundamental form satisfies
\begin{equation*}
\mathcal A(u_{\e}(0,0))\leq\frac{C_0}{r}
\end{equation*}
with $C_0$ independent of $\tilde\e$.

We also obtain an improvement in the regularity of convergence result of \cite{trumper2008relaxation}. We show the level sets of parabolic Allen--Cahn converges in $C^{2,\theta}$ to the curve shortening flow in $\mathbb R^2$ when entropy is below $2\alpha$.
\end{cor}

\begin{rem}
This result can be seen as a relaxation of the curvature estimates found in Brakke's paper \cite{brakke2015motion} and White's paper \cite{white2005local} for curve shortening flow. However, in higher dimensions, we required the rigidity of eternal solutions to the parabolic Allen--Cahn equation with low entropy (as stated in Theorem \ref{ParabolicRigidity2D}) to prove this result. In a subsequent paper \cite{Nguyen2020}, we prove such a rigidity theorem for any dimension $n$, with an entropy bound $\lambda(d\mu_\e)\leq(1+\tau_n)\alpha$ for some $\tau_n>0$ and without the Willmore-type bound in Item (2) above. This allows us to obtain corresponding curvature estimates of the form in Corollary \ref{CurvatureEstimates} under that entropy bound.

Notably, the entropy bound is sharp in dimension $2$. Specifically, for a limit curve shortening flow, the Grim Reaper provides a counter-example, with an entropy of $2$, which can be rescaled to have arbitrarily large curvature.
\end{rem}
This paper is organised as follows: in section \ref{Preliminaries} we standardise our notation and provide background material for the parabolic Allen--Cahn equations. We also prove a rigidity of the blow-up limit of parabolic Allen--Cahn solutions in spatial dimension 2 in section \ref{RigiditySection}, which will be a necessary ingredient in the proof of the curvature estimate Corollary \ref{CurvatureEstimates}. In section \ref{ApproximateSolution} and section \ref{DerivationEquation} we carry out the main estimates and prove the main theorems in Section \ref{ProofMainTheorem}. Finally in section \ref{ProofCurvature} we prove the curvature estimates of low entropy solutions, Corollary \ref{CurvatureEstimates}.

$\textbf{Acknowledgements.}$
H.T.Nguyen was supported by the EPSRC grant EP/S012907/1, S.W was supported by EPSRC grant EP/S012907/1 and EP/T019824/1.

\section{Preliminaries and notations}\label{Preliminaries}

\subsection{Preliminaries about Allen--Cahn and the explicit $1$-d heteroclinic solution}

To ensure consistency throughout the remainder of the paper, we will adopt a particular notation.

It is worth noting that solutions to the Allen--Cahn equation with parameter $\e$ are not invariant under standard parabolic rescaling. However, rescaling the solution does produce equations of the same form, albeit with a different parameter. We will say $u_\e:\mathbb R^{n+1}\times\mathbb R\rightarrow\mathbb R$ satisfies the $\e$-equation if
	\begin{align}\label{EAC}
	\frac{\partial}{\partial t}u_\e=\Delta u_\e-\frac{W'(u_\e)}{\e^2}
	\end{align}
and say $u:\mathbb R^{n+1}\times\mathbb R\rightarrow\mathbb R$ satisfies the $1$-equation if
	\begin{align}\label{AC}
	\frac{\partial}{\partial t}u=\Delta u-W'(u).
	\end{align}

Here $W(u)=\frac{1}{4}(1-u^2)^2$ is the standard double-well potential which will be used through out the rest of the paper. In particular, the potential has two global minima $\pm1$ that represents $2$ stable phases and the function $u$ describes continuous transition between these phases.

Under parabolic rescaling, it is not hard to see if $u_\e(x,t)$ satisfies equation \eqref{EAC}, then
	\begin{align}\label{ParabolicRescaling}
	u^\e(x,t)=u_\e(\e x,\e^2 t)
	\end{align}
satisfies equation \eqref{AC}. Throughout this paper, we will adopt the convention that $u_\e$ with a subscript $\e$ will satisfy the $\e$-equation \eqref{EAC} and $u^\e$ with a superscript $\e$ is obtained by a parabolic rescaling of $u_\e$ and satisfies equation \eqref{AC}.

A static solution to the Allen--Cahn equation \eqref{AC} is a function $u:\mathbb R^{n+1}\rightarrow\mathbb R$ that satisfies the elliptic equation
	\begin{align}\label{EllipticAC}
	\Delta u-W'(u)=0
	\end{align}
and represents an equilibrium state of phase transition in $\mathbb R^{n+1}$.

\begin{defn}\label{RepresentsTransition}
We call a solution $u:\mathbb R^{n+1}\times\mathbb R\rightarrow\mathbb R$ represents a phase transition at time $t$ in $\Omega\subset\mathbb R^{n+1}$ if $\{u(\cdot,t)=0\}\cap\Omega\neq\emptyset$.
\end{defn}
Two trivial solutions to the equation that do not represent phase transitions are
\begin{equation*}
u(x)=\pm1
\end{equation*} which are the equilibrium states when there is only one phase (either one of the two phases $\pm1$) in the whole region $\mathbb R^{n+1}$. In dimension $1$, one can find explicitly the next simplest solution which represents a phase transition by solving the ordinary differential equation
\begin{equation*}
g''(x)-W'(g(x))=0
\end{equation*} where $g:\mathbb R\rightarrow\mathbb R$.

And the explicit solution satisfying the asymptotics $\lim_{x\rightarrow\pm\infty} g(x)=\pm1$ and $g(0)=0$ when $W(u)=\frac{1}{4}(1-u^2)^2$ is
\begin{equation*}
g(x)=\tanh(x).
\end{equation*}

From this $1$-d solution, we also obtain heteroclinic solutions to \eqref{EAC} with any $\e$ parameter and in any dimensions
	\begin{align}\label{1d}
	g_\e(x,t)=g_\e(x_1,...,x_{n+1},t)=\tanh\left(\frac{x_{n+1}}{\e}\right).
	\end{align}

We denote the total energy of the $1$-d heteroclinic solution by
\begin{equation*}
\alpha=E(g)=\int g'^2(x) dx.
\end{equation*}

For a closed initial hypersurface $M_0=\partial E_0\subset\mathbb R^{n+1}$, we can choose a sequence of measures $\mu_{\e,0}$ as in \cite[1.4]{Ilmanen1993} so that $\mu_{\e,0}\rightarrow\alpha\mathcal H^n\lfloor M_0$ as $\e\rightarrow0$. Then by the main results in \cite{Ilmanen1993, tonegawa2003integrality}, if the energy of the sequence is locally uniformly bounded, then the energy measures
	\begin{align}\label{EnergyMeasureExpression}
	d\mu_{\e,t}=\left[\frac{\e|\nabla u_\e(x,t)|^2}{2}+\frac{W(u_\e(x,t))}{\e}\right] dx\rightarrow d\mu_t
	\end{align}
	 converge as $\e\rightarrow0$ to an integer $n$-rectifiable varifold for a.e. $t$ and $\{\mu_t\}_{t\geq0}$ is a mean curvature flow in the sense of Brakke. We note that the entropy assumption in our theorems implies a locally uniform bound for the energy measures.

Similar to Brakke's integral form of mean curvature flow, there is an $\e$ version of the integral form of the parabolic Allen--Cahn equation \cite[3.1]{Ilmanen1993},
	\begin{align}\label{EBrakke}
	\begin{split}
\frac{d}{dt}\int\phi d\mu_{\e,t}=-\int_{\mathbb R^n}\e\phi\left(\Delta u_\e-\frac{W'(u_\e)}{\e}\right)^2 dx-\delta V_{\e,t}(\nabla\phi)-\int\nu\otimes\nu:\nabla^2\phi d\xi_{\e,t}.
	\end{split}
	\end{align}
The measure
\begin{equation*}
d\xi_{\e}=\left[\frac{\e|\nabla u_\e|^2}{2}-\frac{W(u_\e)}{\e}\right]dx
\end{equation*}
 is called the discrepancy measure and it is shown to converge to $0$ in $L^1$ as $\e\rightarrow0$ in \cite{Ilmanen1993, soner1997ginzburg, tonegawa2003integrality}. Additionally, $\delta V_t^\e$ is the first variation of the corresponding varifold; see \cite{Ilmanen1993} for details on how to consider $u_\e(x,t)$ as a general moving varifold whose density is the energy density $d\mu_{\e,t}$.
 
Analogous to Huisken's monotonicity formula in mean curvature flow, Ilmanen in \cite{Ilmanen1993} discovered an almost monotonicity formula for the $\e$-parabolic Allen--Cahn equation \eqref{EAC}
	\begin{align}\label{Monotonicity}
	\begin{split}
&\frac{d}{dt}\int\Psi_{y,s}d\mu_\e(u_\e)\\
	&= -\int_{\mathbb R^n}\e\Psi_{y,s}\left(\Delta u_\e-\frac{W'(u_\e)}{\e^2}+\frac{\nabla u_\e\cdot\nabla\Psi_{y,s}}{\Psi_{y,s}}\right)^2dx\\
	&+\int_{\mathbb R^n}\frac{1}{2(s-t)}\Psi_{y,s}\left[\frac{\e|\nabla u_\e|^2}{2}-\frac{W(u_\e)}{\e}\right]dx\\
	&= -\int_{\mathbb R^n}\e\Psi_{y,s}\left(\Delta u_\e-\frac{W'(u_\e)}{\e^2}+\frac{\nabla u_\e\cdot\nabla\Psi_{y,s}}{\Psi_{y,s}}\right)^2dx+\int\frac{1}{2(s-t)}\Psi_{y,s} d\xi_{\e,t}
\end{split}
	\end{align}
where $\Psi_{y,s}(x,t)=\frac{1}{(4\pi (s-t))^{\frac{n}{2}}}e^{-\frac{|x-y|^2}{4(s-t)}}$ is the $n$-dimensional backward heat kernel centred at $y\in\mathbb R^{n+1}$ with scale $s\in\mathbb R^+$.

It is also computed in \cite[\S 4]{Ilmanen1993} that non-positivity of the discrepancy is preserved in time and thus $\int\Psi_{y,s}d\mu_\e(u_\e)$ is monotone for initial data with non-positive discrepancy.
\subsection{Entropy}
Motivated by Colding--Minicozzi's \cite{colding2012generic} entropy in mean curvature flow, we follow \cite{sun2018entropy} and introduce the entropy functional on the space of Radon measures.
\begin{defn}\label{DefinitionEntropy}
Given a Radon measure $d\mu$ on $\mathbb R^{n+1}$, its entropy $\lambda(d\mu)$ is defined by
	\begin{align}
	\lambda(d\mu)&=\sup_{s>0,y\in\mathbb R^{n+1}}\int\Psi_{y,s}(x,0) d\mu\\\nonumber
&=\sup_{s>0,y\in\mathbb R^{n+1}}\int\frac{1}{(4\pi s)^{\frac{n-1}{2}}}e^{-\frac{|x-y|^2}{4s}} d\mu.
	\end{align}
\end{defn}
We also notice, by an observation of Sun \cite{sun2018entropy}, if the entropy $\lambda$ is below $2\alpha-\delta$ for some $\delta>0$, then the limit mean curvature flow has unit density.

\subsection{Geometry of the Allen--Cahn level sets}

For a non-degenerate point $x\in\mathbb R^{n+1}$ where $|\nabla u|\neq0$, the normal vector of the level set is given by $\nu(x)=\frac{\nabla u}{|\nabla u|}$. The enhanced second fundamental form of $u$ is defined by
	\begin{align*}
	\mathcal A(u)&=\nabla\left(\frac{\nabla u}{|\nabla u|}\right)
	\end{align*}
and
	\begin{align*}
	|\mathcal A(u)|&=\left|\nabla\left(\frac{\nabla u}{|\nabla u|}\right)\right|\\
	&=\frac{\sqrt{|\nabla^2u|^2-|\nabla|\nabla u||^2}}{|\nabla u|}.
	\end{align*}

The enhanced second fundamental form bounds the second fundamental form of level sets, and it is not hard to see $|\mathcal A(u)|=0$ implies $\frac{\nabla u}{|\nabla u|}$ is a parallel vector field in which case the function $u$ has flat level sets.

If the second fundamental form is bounded as in the condition of Theorem \ref{ImprovementRegularity}, then the level sets can be written locally as $C^{1,\theta}$ graphs.
\subsection{Fermi Coordinates with respect to nodal sets} \label{Subsection_Fermi}
Additionally, $\delta V_t^\e$ is the first variation of the corresponding varifold; see \cite{Ilmanen1993} for details on how to consider $u_\e(x,t)$ as a general moving varifold whose density is the energy density $d\mu_{\e,t}$.

Based on the assumptions in Theorem \ref{ImprovementRegularity} of uniform bounds on the enhanced second fundamental form
	\begin{align*}
	|\mathcal A_\e|=\left|\nabla\left(\frac{\nabla u_\e}{|\nabla u_\e|}\right)\right|\leq C_0,
	\end{align*}
the enhanced second fundamental form for the rescaled solution $u^\e$ then satisfies
	\begin{align}\label{RescaledCurvatureSmall}
	|\mathcal A^\e|=\left|\nabla\left(\frac{\nabla u^\e}{|\nabla u^\e|}\right)\right|\leq C_0\e.
	\end{align}
The curvature bound $|\mathcal A^\e|\leq C_0\e$ shows that the nodal sets of the rescaled solutions $u^\e$ have second fundamental form bounded by $C_0\e$. If the nodal sets all pass through $(0,0)$ (this can be assumed in the proof, see Remark \ref{VanishingAtZero}), then up to choosing a subsequence, these nodal sets converge in $C^{1,\alpha}$ as $\e\rightarrow0$ to a flat hyperplane for every $t$. Without loss of generality we may assume this hyperplane to be $\{x_{n+1}=0\}$.

We denote by $\Gamma_{\e,t}$ and $\Gamma^\e_t$ the nodal sets of $u_\e$ and $u^\e$ at time $t$ respectively. Also we let $d_{\e,t}$ and $d^\e_t$ be signed distance to $\Gamma_{\e,t}$ and $\Gamma^\e_t$ respectively, which is positive in the direction of $\nabla u_\e$ when $|\nabla u_\e|\neq0$. Let $\mathbb R^n\supset\Omega\ni y=(y_1,...,y_n)\mapsto\Gamma^\e_0$ be a local parametrization of $\Gamma^\e_0$. By the $C^2$ convergence of $\Gamma^\e_t$ to the flat plane, we can parametrize $\Gamma^\e_t, t\in(-4,4)$ by the parametrization of $\Gamma^\e_0$ for sufficiently small $\e$ (using the nearest point projection of $\Gamma^\e_t$ to $\Gamma^\e_0$).

In a neighbourhood of $\Gamma^\e_t$ where the nearest point projection is well defined, the Fermi coordinates with respect to $\Gamma^\e_t$ are defined by $(y,z,t)\mapsto (y+z\frac{\nabla u^\e}{|\nabla u^\e|},t)\in\mathbb R^{n+1}\times\mathbb R$, where $z=\mathrm{dist}_{\Gamma^\e_t}(x)$ and $y\in\Gamma^\e_t$ is the nearest point projection of $x$ to $\Gamma^\e_t$. Moreover we denote $\Gamma^\e_{z,t}=\{d^\e_t=z\}$ for small $z$ so that $d^\e_t$ is well defined.

In Fermi coordinates, the Laplacian operator has the form
	\begin{align}\label{FermiLaplacian}
	\Delta_{\mathbb R^{n+1}}=\Delta_{\Gamma^\e_{z,t}}+\partial^2_z+H_{\Gamma^\e_{z,t}}\partial_z.
	\end{align}
where $H_{\Gamma^\e_{z,t}}=\mathrm{div}_{\Gamma^\e_{z,t}}(\partial_z)$ is the scalar mean curvature of $\Gamma^\e_{z,t}$ with respect to the normal $\partial_z$.

Here we adopt the sign convention the mean curvature vector of the sets $\Gamma^\e_{z,t}$ to be $\vec{H}_{\Gamma^\e_t}=-H_{\Gamma^\e_{z,t}}\partial_z=-\mathrm{div}_{\Gamma^\e_{z,t}}(\partial_z)\partial_z$.

\subsection{Parabolic H\"older norms}
To establish clear and concise notation for domains, we introduce the following notation. For $m\in\mathbb N^+$:
	\begin{align}\label{Notations}
	B^m_r(x_0)&=:\{y\in\mathbb R^m: |x-x_0|<r\}, x_0\in\mathbb R^m,\\\nonumber
B^m_r&=:\{y\in\mathbb R^m: |x|<r\},\\\nonumber
I&=:(-1,1)\subset\mathbb R,\\\nonumber
I_r&=:(-r,r)\subset\mathbb R, r>0.
	\end{align}
If there is no confusion from the context, we will omit the superscript $m,\e$.

The parabolic distance for $2$ points $X_1=(x_1,t_1),X_2=(x_2,t_2)\in\mathbb R^m\times\mathbb R$ is defined by $\mathrm{dist}_p(X_1,X_2)=\max(|x_1-x_2|,\sqrt{|t_1-t_2|})$

For a function $u:\mathbb R^m\times\mathbb R \rightarrow\mathbb R$ and an open set $W\subset\mathbb R^m\times\mathbb R$, we will use the parabolic H\"older norm defined by
	\begin{align}\label{HolderNorm}
	\begin{split}
&[u]_{\theta;W}=\sup_{X_1\neq X_2, X_1,X_2\in W}\frac{|u(X_1)-u(X_2)|}{\left(\mathrm{dist}_p(X_1,X_2)\right)^\theta},\\
	&\|u\|_{C^{0,\theta}(W)}=\sup_{X\in W} |u(X)|+[u]_{\theta;W},\\
	&\|u(x,t)\|_{C^{k,\theta}(W)}=\sum_{i+2j\leq k}\|\partial_x^i\partial_t^ju\|_{C^{0,\theta}(W)}.
\end{split}
	\end{align}
In particular
	\begin{align*}
	&\|u(x,t)\|_{C^{2,\theta}(W)}=\sum_{i=0}^2\sup_W|\partial_x^iu|+\sup_W|\partial_t u|+\|\partial_x^2u\|_{C^{0,\theta}(W)}+\|\partial_t u\|_{C^{0,\theta}(W)}.
	\end{align*}
We will now recall the standard parabolic Schauder estimates, which we will use later in our analysis.
\begin{lem}[{\cite[Theorem 8.2]{white2005local}}]\label{ParabolicSchauder}
Let $L=\frac{\partial}{\partial t}+\Delta_{\mathbb R^m}$ be the heat operator in $\mathbb R^m\times\mathbb R$ and let $u\in C^{2,\theta}\left(B_{r'}\times I_{(r')^2}\right)$. Then, for any $r<r^\prime$ the following estimate holds: 
	\begin{align*}
	\|u\|_{C^{2,\theta}\left(B_r\times I_{r^2}\right)}\leq C\left[\|u\|_{C^{0}\left(B_{r'}\times I_{(r')^2}\right)}+\|Lu\|_{C^{0,\theta}\left(B_{r'}\times I_{(r')^2}\right)}\right],
	\end{align*}
where $C$ depends on $r,r',m,\theta$.

\end{lem}

\section{Rigidity of the planar solution in dimension 2}\label{RigiditySection}

The $1$-d heteroclinic solution given by equation \eqref{1d} plays a role analogous to that of the static planar solution in mean curvature flow. In particular, the rigidity of such $1$-d heteroclinic solutions is an essential ingredient in the proof of curvature estimates using blow-up arguments.

Before proceeding, we recall the following rigidity theorem \cite{Mantoulidis2021} (cf. \cite{wang2014new}) for the $1$-d heteroclinic solution of the elliptic Allen--Cahn equation in any dimension. (Here, we have restated the condition from \cite{Mantoulidis2021} that the asymptotic density equals $1$ in terms of the equivalent condition that the blown-down limit is a multiplicity $1$ plane.) This will be used to give a classification of possible rescaling limits of  Allen--Cahn solutions in dimension 2 in the parabolic setting under the assumptions of Theorem \ref{CurvatureEstimates}.

\begin{thm}[Theorem 3.6 of \cite{Mantoulidis2021}]\label{EllipticRigidity}
Let $u:\mathbb R^{n+1}\rightarrow\mathbb R$ be a solution to the elliptic Allen--Cahn equation \eqref{EllipticAC}, and let its corresponding blow-down sequence be $u_\e(x)=u(\frac{x}{\e})$ for $\e\rightarrow0$. Suppose that the energy measures $d\mu_\e$ of the blow-down sequence satisfy
	\begin{align*}
	\frac{1}{\alpha}d\mu_\e=\frac{1}{\alpha}\left[\frac{\e|\nabla u_\e|^2}{2}+\frac{W(u_\e)}{\e}\right]dx\rightarrow\mathcal H^n\lfloor P
	\end{align*}
as measures, where $P\subset\mathbb R^{n+1}$ is a hyperplane.

Then, up to a rigid motion in $\mathbb R^{n+1}$, $u(x)=g(x_{n+1})$.
\end{thm}
As a consequence, we have
\begin{thm}\label{ParabolicRigidity2D}
Let $u_\e(x,t)$ be a sequence of solutions to \eqref{EAC} with $\e\rightarrow0$ in $\mathbb R^2\times\mathbb R$ satisfying 
\begin{align}\label{WillmoreBound}
\int_{\mathbb R^2}\e\left(\Delta u_\e(x,0)-\frac{W'(u_\e(x,0))}{\e^2}\right)^2dx\leq W_0
\end{align}
for some uniform $W_0>0$ as assumed in Corollary \ref{CurvatureEstimates}. Suppose that the energy measures $d\mu_\e$ satisfy
\begin{align*}
\frac{1}{\alpha}d\mu_{\e,t}\rightarrow d\mu_t
\end{align*} 
where the limit Brakke flow $d\mu_{t}=\mathcal H^n\lfloor\Sigma_t$ is non-empty and smooth for all $t\in(-r^2,r^2)$.

Consider a blow-up sequence $u^\e(x,t)=:u_\e(\e x,\e^2t)$, which satisfies the equation \eqref{AC}. Then after passing to a subsequence, the blow-up sequence $u^\e\rightarrow u^\infty$ on compact subsets, where the limit is a static 1-d heteroclinic solution $u^\infty(x,t)=g(x_2),\forall t\in\mathbb R$ up to a rotation and translation in space-time.
\end{thm}
\begin{proof}
The proof uses the sub-criticality of the Willmore type term $\int_{\mathbb R^2}\e\left(\Delta u_\e-\frac{W'(u_\e)}{\e^2}\right)^2dx$ in dimension 2 under rescalings. We have for any $R>0$ that
\begin{align*}
\int_{B_\frac{R}{\e}\cap\mathbb R^2}\left(\Delta u^\e(x,0)-W'(u^\e(x,0))\right)^2dx &=\e\cdot \int_{B_R\cap\mathbb R^2}\e\left(\Delta u^\e(x,0)-\frac{W'(u^\e(x,0))}{\e^2}\right)^2dx\\
&\leq\e W_0\rightarrow0.
\end{align*}
Thus the limit $u^\infty$ must satisfy $u_t=\Delta u-W'(u)=0$ at $t=0$, i.e. the time $t=0$ slice $u^\infty(\cdot, 0)$ satisfies the elliptic Allen--Cahn equation.

Moreover, we have
\begin{claim}\label{BlowDownToPlane2D}
Consider a blow-down sequence $u^\infty_{\e}(x,t)=u^\infty(\e x,\e^2t)$ of $u^\infty$ which satisfies the equation \eqref{EAC} with parameter $\e$, then after passing to a subsequence, the energy measures of the blow-down sequence $\frac{1}{\alpha}d\mu^\infty_{\e,t}\rightarrow\mathcal H^2\lfloor P$ on compact subsets, where the limit Brakke flow is a static flow supported on a hyperplane for all $t\in\mathbb R$.
\end{claim}
\begin{proof}[Proof of Claim \ref{BlowDownToPlane2D}]
By the sub-sequential smooth convergence of $u^\e$ to $u^\infty$ on compact subsets, for each $j>0$, we can choose $\e_j$ such that the followings are satisfied 
\begin{align}\label{PropertiesRescaledEternalFlow}
&\e_{j}<\frac{1}{j^2},\\\nonumber
&\text{$u^{\e_{j}}$ is $\frac{1}{j^2}$ close to $u^\infty$ in $C^j\left(B_{j^2}(0)\times[-j^4,j^4]\right).$}
\end{align}
Denote by $\hat u_j(x,t)=:u_{\e_{j}}(\e_{j}\cdot j\cdot x,\e_{j}^2\cdot j^2\cdot t)=u^{\e_j}(jx, j^2 t)$, which satisfies the equation \eqref{EAC} with $\e=\frac{1}{j}$. \eqref{PropertiesRescaledEternalFlow} then gives
\begin{align}\label{BlowDownCloseToPlane1}
&\text{$\tilde u_j$ is $\frac{1}{j}$ close to $u^\infty_{\frac{1}{j}}$ in $C^j\left(B_{j}(0)\times[-j^2,j^2]\right)$}.
\end{align}
by rescaling. 

Since $\frac{1}{\alpha}d\mu_{\e_j,t}\rightarrow\mathcal H^2\lfloor\Sigma_t$ for $t\in(-r^2,r^2)$ by assumption and $\frac{1}{j\e_j}\rightarrow\infty$, we then have by rescaling that the limit of the energy measures of $\tilde u_j$
\begin{align}\label{BlowDownCloseToPlane2}
\frac{1}{\alpha}\lim_{j\rightarrow\infty}d\tilde\mu_{j,t}=\lim_{j\rightarrow\infty}\mathcal H^2\lfloor\left(\frac{1}{j\e_j}\cdot\Sigma_t\right)\rightarrow\mathcal H^2\lfloor P, \forall t\in\mathbb R
\end{align}
as measures.

Because $j$ is arbitrary, combining \eqref{BlowDownCloseToPlane1}, \eqref{BlowDownCloseToPlane2} and the triangle inequality, we have that the energy measures 
\begin{align*}
\frac{1}{\alpha}du^\infty_{\frac{1}{j},t}\rightarrow\mathcal H^2\lfloor P, \forall t\in\mathbb R,
\end{align*} 
namely $u^\infty$  blows down to a flat plane after passing to a subsequence.
\end{proof}
Now with this claim applied to $t=0$ slice, we know that $u^\infty(\cdot, 0)$ satisfies the elliptic Allen Cahn equation and blows down to a plane. By the rigidity in the elliptic case (Theorem \ref{EllipticRigidity}), it is the $1$-d heteroclinic solution with a flat slice for that particular time, and thus the whole eternal solution is a static $1$-d heteroclinic solution by the uniqueness of the Cauchy problem.
\end{proof}

\begin{rem}
The rigidity theorem discussed here has been generalised to all dimensions $n$ and without the assumptions of the bound \eqref{WillmoreBound} in a recent work \cite[Theorem 1.3]{Nguyen2020} if an appropriate scale invariant renormalised energy is sufficiently close to $1$.
\end{rem}

\section{The Approximate Solution}\label{ApproximateSolution}

To construct an approximate solution, we use the zero sets of a parabolic Allen--Cahn equation given by equation \eqref{AC}. Specifically, we compose the local distance function to its nodal sets with the $1$-d heteroclinic solution. Our goal is to demonstrate that this approximation is well-controlled provided that the nodal sets are singly sheeted.

Similar to section 9 of \cite{Wang2019a}, we choose $\bar g$ to be a smooth cutoff approximation at infinity of the $1$-d heteroclinic solution $g$ with well controlled errors
 	\begin{align}\label{DefBarG}
	\bar g(x)=\zeta\left(\frac{x}{3|\log \e|}\right)g(x)+\left(1-\zeta\left(\frac{x}{3|\log \e|}\right)\right)\mathrm{sgn}(x)
	\end{align}
 where $\zeta$ is a smooth cutoff function supported in $(-2,2)$ with $\zeta\equiv1$ in $(-1,1)$ and $|\zeta'|+|\zeta''|\leq16$, and $\mathrm{sgn}=\frac{x}{|x|}$ is the sign function.
We have
	\begin{align*}
	\bar g''=W'(\bar g)+\bar\eta
	\end{align*}
with
	\begin{align}
	\begin{split}\label{CutoffError}
&\spt(\bar\eta)\subset\{3|\log\e|\leq|x|\leq6|\log\e|\}\\
	&|\bar\eta|+|\bar\eta'|+|\bar\eta''|\leq O(\e^3)\\
	&\int\bar g'^2=\alpha+O(\e^3)\\
	&\sup_{\mathbb R}|g-\bar g|=O(\e).
\end{split}
	\end{align}
For each $h\in C^2\left((\Gamma^\e_{0,0}\cap B_2)\times(-2,2)\right)$, we define
	\begin{align}\label{eqn_best_approx}
	g^{\e,*}(y,z,t)=\bar g(z-h(y,t))=\bar g(d^\e_t-h(y,t)).
	\end{align}
Here the function $h$ is used to obtain an optimal approximation to offset the effect from mean curvature of the nodal sets $\Gamma^\e_{0,t}$. We have
\begin{prop}[c.f. \cite{Wang2019a} Proposition 9.1]
If the nodal sets $\Gamma^\e_{0,t}$ have uniformly bounded second fundamental form (as assumed in Theorem \ref{ImprovementRegularity}), then here exists an $h$ with $|h|\ll1$ such that
\begin{align}\label{Ortho}
	\int (u^\e-g^{\e,*})\bar g'(z-h(y,t)) dz=0,
	\end{align}
	for each $t$.
\end{prop}
\begin{rem}
The proof is essentially the same as in \cite[Proposition 9.1]{Wang2019a}. The condition that the level sets have uniformly bounded second fundamental form guarantees that the rescaled solutions are arbitrarily close to a $1$-d heteroclinic solution $g$ with flat level sets. (See also Theorem \ref{GradientUpperLowerBound} for the argument of showing that the blow-up limit being the $1$-d solution under the same assumptions.)
\end{rem}
We denote by $\phi^\e=u^\e-g^{\e,*}$. And for simplicity, we also denote
	\begin{align}\label{SimpleDerivative}
	(g^{\e,*})'=g'(z-h(y,t)), (g^{\e,*})''=g''(z-h(y,t)).
	\end{align}

We compute in the $(y,z,t)$ coordinates
 	\begin{align}\label{EquationInFermiCoordinate}
	&\frac{\partial g^{\e,*}}{\partial t}-\Delta g^{\e,*}\\\nonumber
&= (g^{\e,*})'\cdot \left[-\left\langle \frac{\partial X_{\Gamma^\e_{z,t}}}{\partial t},\nu_{\Gamma^\e_{0,t}}\right\rangle-\frac{\partial h}{\partial t}\right]- (g^{\e,*})''-H_{\Gamma^\e_{z,t}} (g^{\e,*})'+ (g^{\e,*})'\Delta_{\Gamma^\e_{z,t}} h\\\nonumber
&- (g^{\e,*})''|\nabla h|^2.
	\end{align}

We will see later the normal velocity term $\left\langle \frac{\partial X_{\Gamma^\e_{z,t}}}{\partial t},\nu_{\Gamma^\e_{0,t}}\right\rangle$ cancels out the mean curvature term of the nodal sets up to small error as $\e\rightarrow0$ by the convergence to the mean curvature flow for the unscaled equation as $\e\rightarrow0$.

We compute the equation for $\phi^\e$ as follows
	\begin{align}\label{Difference}
	\begin{split}
&\left(\frac{\partial}{\partial t}-\Delta\right)\phi^\e\\
	&= \left(\frac{\partial}{\partial t}-\Delta_{\Gamma^\e_{z,t}}-\partial^2_z-H_{\Gamma^\e_{z,t}}\partial_z\right)\phi^\e\\
	&= -W'(\phi^\e+g^{\e,*})+W'(g^{\e,*})+\bar\eta-\bar g'(z-h)\cdot\left[-\left\langle \frac{\partial X_{\Gamma^\e_t}}{\partial t},\nu_{\Gamma^\e_t}\right\rangle-\frac{\partial h}{\partial t}\right]\\
	&+H_{\Gamma^\e_{z,t}}\bar g'(z)-\bar g'(z-h)\Delta_{\Gamma^\e_{z,t}} h+\bar g''(z-h)|\nabla h|^2\\
	&= -[W'(\phi+g^{\e,*})-W'(g^{\e,*})]+\left[(g^{\e,*})'\left( \frac{\partial h}{\partial t}-\Delta_{\Gamma^\e_{z,t}}h+H_{\Gamma^\e_{z,t}}\right)\right]+[(g^{\e,*})''|\nabla h|^2]\\
	&+\left[(g^{\e,*})'\left\langle \frac{\partial X_{\Gamma^\e_t}}{\partial t},\nu_{\Gamma^\e_t}\right\rangle\right]+\bar\eta\\
	&= I+II+III+IV+\bar\eta.
\end{split}
	\end{align}

The Term I is given by $-W'(\phi^\e+g^{\e,})+W'(g^{\e,})=-W''(g^{\e,*})\phi^\e+\mathcal R(\phi^\e)$, and its H\"older norm is bounded by the H\"older norm of $\phi_\e$. The remainder term $\mathcal R(\phi^\e)$ is a polynomial in $\phi^\e$ whose linear and constant terms vanish.

The Term III is bounded by the $C^{2,\theta}$ norm of $h$, which, in turn, is bounded by the $C^{2,\theta}$ norms of $\phi^\e$ via an interpolation inequality (cf. \cite[Lemma 9.6]{Wang2019a}).

We will estimate the H\"older norm of Term II + Term IV and demosntrate they are sufficiently small in the following section.


We will conclude this section with the following two lemmas, whose proofs will be presented in Appendix Section \ref{Appendix}. The first lemma provides an estimate of the H\"older norm of the difference between $u_\e$ and the flat $1$-d solution, and the second lemma gives error estimates for the geometries of $\Gamma^\e_{z,t}$ and $\Gamma^\e_{0,t}=\Gamma^\e_t$.
\begin{lem}\label{LemmaEstimateDifferenceToFlatSolution}
Suppose $u^\e$ satisfies $u^\e(0,0)=0$, then for any $0<r<r^\prime$, then up to a rotation about the space origin, the H\"older norm of the difference between $u^\e$ and the $1$-d heteroclinic solution is bound as follows:
	\begin{align}\label{HolderSmallnessRescaled}
	&\|u^\e-\tilde g\|_{C^{k,\theta}(B_r(x_0,0)\times I_{r^2})}\leq O(\e)+O(\|\phi^\e\|_{C^{2,\theta}(B_{r'}\times I_{(r')^2})}),\\\nonumber
&\forall k\in\mathbb N, \theta\in(0,1),
	\end{align}
where $\tilde g(x)=\tilde g(x_1,...,x_{n+1})=g(x_{n+1})$ and $C^{k,\theta}$ is the parabolic H\"older norm (see \eqref{HolderNorm} for the definition).

In particular, since $\frac{\partial}{\partial t}\tilde g=0$, the H\"older norms which contain at least one order of $t$ derivative satisfy
	\begin{align}
	\left \|\frac{\partial}{\partial t}u^\e\right \|_{C^{k,\theta}(B_r(x_0,0)\times I_{r^2})}\leq O(\e)+O(\|\phi^\e\|_{C^{2,\theta}(B_{r'}\times I_{(r')^2})}).
	\end{align}
\end{lem}



Since the second fundamental form satisfies $A_{\Gamma^\e_{z,t}(y)}=(I-zA_{\Gamma^\e_{0,t}(y)})^{-1}A_{\Gamma^\e_{0,t}(y)}$, we also have the error estimates in $z$ for the second fundamental forms and Laplacian.
\begin{lem}\label{LemmaErrorInZ}
Suppose $u^\e$ satisfies $u^\e(0,0)=0$, then for any $0<r<r^\prime$ the error of second fundamental form and its time derivatives in $z$ direction is bounded by
	\begin{align}\label{CurvatureError}
	&\sup_{y\in B_r^n}|A_{\Gamma^\e_{z,t}(y)}-A_{\Gamma^\e_{0,t}(y)}|\leq\sup_{y\in B_r^n}|z||A_{\Gamma^\e_{0,t}(y)}|^2=O(\e^2)\\\label{CurvatureDerivativeError}
&\sup_{y\in B_r^n}\left|\frac{\partial}{\partial t}\left(A_{\Gamma^\e_{z,t}(y)}-A_{\Gamma^\e_{0,t}(y)}\right)\right|=O(\e^2)+O(\|\phi^\e\|^2_{C^{2,\theta}(B_{r'}\times I_{(r')^2})}).
	\end{align}
In particular, since the H\"older norm is bounded by the Lipschitz norm in time up to a uniform constant, we have
	\begin{align*}
	\sup_{y\in B_r^n}\|A_{\Gamma^\e_{z,\cdot}(y)}-A_{\Gamma^\e_{0,\cdot}(y)}\|_{C^\frac{\theta}{2}(I_{r^2})}\leq O(\e^2)+O\left(||\phi^\e||^2_{C^{2,\theta}(B_{r'})\times I_{(r')^2}}\right).
	\end{align*}
Similarly, for any $\varphi\in C^2\left(B_{r'}^n\times I_{(r')^2}\right)$, we can compute the error in the $z$ direction of $\Delta_{\Gamma^\e_{z,t}}\varphi$ and its H\"older norm in time and obtain
	\begin{align}\label{LaplacianError}
	\sup_{y\in B_r^n}|\Delta_{\Gamma^\e_{z,t}}\varphi(y,t)-\Delta_{\Gamma^\e_{0,t}}\varphi(y,t)|&\leq\left(\sup_{B_r^n}\e|z| (|\nabla\varphi(\cdot,t)|+|\nabla^2\varphi(\cdot,t)|)\right)\\\nonumber
&\leq O(\e^2)+O\left(||\varphi||^2_{C^{2,\theta}\left(B_r^n\times I_{r^2}\right)}\right)\\\label{LaplacianHolderError}
\sup_{y\in B_r^n}\|\Delta_{\Gamma^\e_{z,\cdot}}\varphi(y,\cdot)-\Delta_{\Gamma^\e_{0,\cdot}}\varphi(y,\cdot)\|_{C^\frac{\theta}{2}(I_{r^2})}&\leq O(\e^2)+O\left(\|\phi^\e\|^2_{C^{2,\theta}(B_{r'}\times I_{(r')^2})}\right)\\\nonumber
&+O\left(||\varphi||^2_{C^{2,\theta}\left(B_{r'}^n\times I_{(r')^2}\right)}\right),
	\end{align}
for $\theta\in(0,1)$.
\end{lem}
\begin{rem}\label{VanishingAtZero}
Notice that we do not assume $u^\e(0,0)=0$ in Theorem \ref{ImprovementRegularity}. However, in the proof of the uniform curvature bound, we can perform a parabolic translation in space-time so that this condition holds for the whole sequence without changing the $C^{2,\alpha}$ bound.
\end{rem}
We also record that the difference of metrics in space can also be bounded (c.f. \cite[Section 8]{Wang2019a})
\begin{align}\label{MetricError}
&\left(g_{ij}\right)_{\Gamma^\e_{z,t}}=\left(g_{ij}\right)_{\Gamma^\e_{0,t}}+O\left(|A|_{\Gamma^\e_{0,t}}\right)=\left(g_{ij}\right)_{\Gamma^\e_{0,t}}+O(\e)\\\nonumber
&|\nabla_yg_{ij}(y,z,t)|+|\nabla_yg^{ij}(y,z,t)|=O(\e).
\end{align}

\section{Derivation of the equation and estimate for term II+IV}\label{DerivationEquation}\label{DerivationEquation}

In this section, we derive estimates that are analogous to those in Section 10 of Wang and Wei's work \cite{Wang2019a}. In that paper, the authors developed a Toda system to model interactions between different layers. However, in our case, the entropy assumption allows us to assume the nodal sets converge to a single layer, which is guaranteed by our entropy bound condition. This assumption leads to significant simplifications in the equations.
From now on we will drop the superscripts $\e$ in the equations if there is no confusion. Throughout this section, we will use the following notation
	\begin{align*}
	u(x,t)=:u^\e(x,t)=u_\e(\e x,\e^2t)
	\end{align*}
which are solutions of \eqref{AC} obtained by rescaling solutions $u_\e$ of \eqref{EAC}. Similarly we denote by
	\begin{align*}
	\Gamma_{z,t}&=:\Gamma^\e_{z,t},\\
	\Gamma_t&=:\Gamma^\e_t=\Gamma^\e_{0,t},\\
	 f_t&=:f^\e_t=f^\e(\cdot,t),\\
	 g^*&=:g^{\e,*},\\
	\phi&=:\phi^\e=u^\e-g^{\e,*}.
	\end{align*}
As a consequence of \eqref{CutoffError}, we get 
\begin{align}\label{g*Error}
&W'(\phi+g^*)-W'(g^*)= W''(g^*)(g^*)'\phi+\mathcal R(\phi)\\\nonumber
&\int (g^*)'\partial_{zz}\phi=\int (g^*)'''\phi=\int W''(g^*)(g^*)'\phi+O(\e^2).
\end{align}
By multiplying \eqref{Difference} by $(g^*)'(y,z,t)=\bar g'(z-h(y,t))$ and integrating in the spatial direction normal to the nodal sets (notice $\bar g$ is compactly supported, so the integral is well defined), we get
	\begin{align}\label{IntegralEquation}
	\begin{split}
&\int (g^*)'\left(\frac{\partial}{\partial t}-\Delta\right)\phi dz\\
	&=\int (g^*)'\left(\frac{\partial}{\partial t}-\Delta\right)(u-g^*) dz\\
	&= \int (g^*)'\left(\frac{\partial}{\partial t}-\Delta_{\Gamma_{z,t}}-\partial^2_z-H_{\Gamma_{z,t}}\partial_z\right)(u-g^*) dz\\
	&= -\int (g^*)'[W'(\phi+g^*)-W'(g^*)] dz+\int (g^*)'\left[(g^*)''|\nabla_{\e,z,t} h|^2\right] dz\\
	&+\int [(g^*)']^2\left[\frac{\partial h}{\partial t}-\Delta_{\Gamma_{z,t}}h+H_{\Gamma_{z,t}}+\left\langle \frac{\partial X_{\Gamma_t}}{\partial t},\nu_{\Gamma_t}\right\rangle\right] dz+\int (g^*)'\bar\eta dz,
\end{split}
	\end{align}
where we used \eqref{EquationInFermiCoordinate} to compute $\left(\frac{\partial}{\partial t}-\Delta\right)g^*$.

To obtain improved estimates, we make use of the orthogonality condition \eqref{Ortho} to offset the error in vertical direction of the approximate solution $\phi$ (using notation \eqref{SimpleDerivative})
	\begin{align}\label{Orthogonality}
	\begin{split}
&\int [u(y,z,t)-g^*(y,z,t)]\bar g'(z-h(y,t)) dz=\int \phi \bar g'(z-h)
 dz\\
	&=\int \phi (g^*)'dz=0.
\end{split}
	\end{align}

Differentiating once \eqref{Orthogonality} in tangential direction (the coordinate $y$ with respect to the Fermi coordinate) we get
\begin{equation*}
\int \phi_{y_i}(g^*)'dz-h_{y_i}\int \phi (g^*)'' dz=0.
\end{equation*}
Differentiating again we have
	\begin{align*}
	&\int \left(\frac{\partial^2\phi}{\partial y_i\partial y_j}(g^*)'-\frac{\partial \phi}{\partial y_i}(g^*)''\frac{\partial h}{\partial y_j}-\frac{\partial \phi}{\partial y_j}(g^*)''\frac{\partial h}{\partial y_i}-\phi (g^*)''\frac{\partial^2 h}{\partial y_i\partial y_j}+\phi (g^*)'''\frac{\partial h}{\partial y_i}\frac{\partial h}{\partial y_j}\right) dz\\&=0.
	\end{align*}
And thus by taking the trace, we get
	\begin{align}\label{LeftLaplacian}
	\begin{split}
&\int \Delta_{\Gamma_t}\phi (g^*)' dz\\
	&= \Delta_{\Gamma_t}h\int \phi (g^*)'' dz-2\int \left\langle\nabla_{\Gamma_t}\phi,\nabla_{\Gamma_t}h\right\rangle(g^*)'' dz\\
	&-|\nabla_{\Gamma_t} h|^2\int \phi (g^*)''' dz,
\end{split}
	\end{align}
where $g_{\Gamma_t}$ is the induced Riemannian metric on $\Gamma_t$

Moreover, differentiating the orthogonality condition \eqref{Ortho} with respect to time $t$ and integrating by parts, we get

	\begin{align}\label{LeftTime}
	\begin{split}
\int \frac{\partial\phi}{\partial t}(g^*)' dz&= -\int \phi (g^*)''\left(-\left\langle \frac{\partial X_{\Gamma_t}}{\partial t},\nu_{\Gamma_t}\right\rangle-\frac{\partial h}{\partial t}\right) dz\\
	&= \left(\left\langle \frac{\partial X_{\Gamma_t}}{\partial t},\nu_{\Gamma_t}\right\rangle+\frac{\partial h}{\partial t}\right)\int \phi (g^*)'' dz.
\end{split}
	\end{align}

Finally, we also have that the H\"older norms of $\phi$ bound H\"older norms of $h$ (c.f. \cite[9.6]{Wang2019a})
\begin{lem}
	\begin{align}\label{PhiBoundsh}
	\|h(\cdot, t)\|_{C^{k,\theta}(B^n_r(y_0))}&\leq O(||\phi(\cdot,t)||_{C^{k,\theta}(B_r(y_0,0))}),\\\nonumber
\|h\|_{C^{k,\theta}(B^n_r(y_0)\times I_{r^2})}&\leq O(||\phi||_{C^{k,\theta}(B_r(y_0,0)\times I_{r^2})}),
	\end{align}
for any $(y_0,0,t)\in\Gamma_t$ in the Fermi coordinate and $k=0,1,2$.
\end{lem}
\begin{proof}
Since
	\begin{align*}
	\phi(y,z,t)=u(y,z,t)-g^*(y,z,t)=u(y,z,t)-\bar g\left(z-h(y,t)\right)
	\end{align*}
and $u(y,0,t)=u^\e(y,0,t)=0$ in the Fermi coordinate, we have
	\begin{align*}
	\phi(y,0,t)=-\bar g\left(-h(y,t)\right).
	\end{align*}
Notice that $|h|\ll1$ and so there is a uniform $C>0$ such that
	\begin{align}\label{GradientBound_g}
	\frac{1}{C}<\bar{g}^{\prime}(-h(y, t))\leq C,
	\end{align}
so we have
	\begin{align*}
	|h(y,t)|=\bar g^{-1}\left(-\phi(y,t)\right)\leq C|\phi(y,t)|.
	\end{align*}
Differentiating in the spatial $y$ variable and time $t$ variable respectively gives
	\begin{align*}
	\nabla_0\phi(y,0,t)&=\bar g'\left(-h(y,t)\right)\nabla h(y,t)\\
	\nabla^2_0\phi(y,0,t)&=-\bar g''\left(h(y,t)\right)\nabla h(y,t)\otimes\nabla h(y,t)+\bar g'\left(h(y,t)\right)\nabla^2h\\
	\frac{\partial}{\partial t}\phi(y,0,t)&=\bar g'\left(-h(y,t)\right)\frac{\partial}{\partial t}h(y,t).
	\end{align*}
Combining with \eqref{GradientBound_g}, we then have the desired bounds.
\end{proof}
The term from II+IV comes out of the integral by the error estimates \eqref{CurvatureError}, \eqref{LaplacianError} and \eqref{PhiBoundsh}. In $B_r(y_0)\times I$,
	\begin{align*}
	&\int [(g^*)']^2\left[\frac{\partial h}{\partial t}-\Delta_{\Gamma_{z,t}}h+H_{\Gamma_{z,t}}+\left\langle \frac{\partial X_{\Gamma_t}}{\partial t},\nu_{\Gamma_t}\right\rangle\right] dz\\
	&= \alpha\left[\frac{\partial h}{\partial t}-\Delta_{\Gamma_{0,t}}h+H_{\Gamma_{0,t}}+\left\langle \frac{\partial X_{\Gamma_t}}{\partial t},\nu_{\Gamma_t}\right\rangle\right]+O(\e^2)+O(\e||h||_{C^{2,\theta}(B^n_r(y_0)\times I_{r^2})})\\
	&= \alpha\left[\frac{\partial h}{\partial t}-\Delta_{\Gamma_t}h+H_{\Gamma_t}+\left\langle \frac{\partial X_{\Gamma_t}}{\partial t},\nu_{\Gamma_t}\right\rangle\right]+O(\e^2)+O(||\phi||^2_{C^{2,\theta}(B_r(y_0,0)\times I_{r^2})}),
	\end{align*}
where $\alpha$ is the total energy for the $1$-d heteroclinic solution. The additional term not in Wang--Wei is
	\begin{align*}
	&\int [(g^*)']^2\left(\frac{\partial h}{\partial t}+\left\langle \frac{\partial X_{\Gamma_t}}{\partial t},\nu_{\Gamma_t}\right\rangle\right) dz\\
	&= \left(\frac{\partial h}{\partial t}+\left\langle \frac{\partial X_{\Gamma_t}}{\partial t},\nu_{\Gamma_t}\right\rangle\right)\int_{-\infty}^\infty[(g^*)']^2 dz\\
	&= \alpha\cdot\frac{\partial h}{\partial t}+\alpha \left\langle \frac{\partial X_{\Gamma_t}}{\partial t},\nu_{\Gamma_t}\right\rangle+O(\e^3)
	\end{align*}
by the error control in the cutoff $\bar g$ of $g$ in \eqref{CutoffError}.

\subsection{Sup norm of II+IV}
Using integration by parts and \eqref{CutoffError}, equation \eqref{IntegralEquation} can be written as
	\begin{align*}
	&\int (g^*)'\left(\frac{\partial}{\partial t}-\Delta\right)\phi dz\\
	&= -\int (g^*)'[W'(\phi+g^*)-W'(g^*)]dz+\int \bar g'\bar g''|\nabla_{\Gamma_{z,t}} h|^2dz\\&+\int [(g^*)']^2\left[\frac{\partial h}{\partial t}-\Delta_{\Gamma_{z,t}}h+H_{\Gamma_{z,t}}+\left\langle \frac{\partial X_{\Gamma_t}}{\partial t},\nu_{\Gamma_t}\right\rangle\right] dz+O(\e^2).
	\end{align*}
Utilising \eqref{LeftLaplacian} and \eqref{LeftTime}, we compute in $B_r(y_0)\times I_{r^2}$, the left hand side (LHS)
	\begin{align*}
	&LHS\\
	&=\int (g^*)'\left(\frac{\partial}{\partial t}-\Delta\right)\phi dz\\
	&= \left(\left\langle \frac{\partial X_{\Gamma_t}}{\partial t},\nu_{\Gamma_t}\right\rangle+\frac{\partial h}{\partial t}\right)\int \phi (g^*)'' dz-\int (g^*)'\left(\Delta_{\Gamma_{z,t}}+\partial^2_z+H_{\Gamma_{z,t}}\partial_z\right)\phi dz\\
	&= \left(\left\langle \frac{\partial X_{\Gamma_t}}{\partial t},\nu_{\Gamma_t}\right\rangle+\frac{\partial h}{\partial t}\right)\int \phi (g^*)'' dz-\Delta_{\Gamma_t}h\int \phi (g^*)'' dz+O(||\phi||^2_{C^{2,\theta}(B_r(y_0,0)\times I_{r^2})})\\
	&-\int \phi_{zz}(g^*)' dz-\int(g^*)'\left(\Delta_{\Gamma_{z,t}}-\Delta_{\Gamma_{0,t}}\right)\phi dz-\int H_{\Gamma_{0,t}}\phi_z (g^*)' dz-\int \left(H_{\Gamma_{z,t}}-H_{\Gamma_{z,t}}\right)\phi_z (g^*)' dz\\
	&= \left(\left\langle \frac{\partial X_{\Gamma_t}}{\partial t},\nu_{\Gamma_{\e,t}}\right\rangle+\frac{\partial h}{\partial t}\right)\int \phi (g^*)'' dz-\Delta_{\Gamma_t}h\int \phi (g^*)'' dz\\
	&-\int W''(g^*)(g^*)'\phi dz+\int H_{\Gamma_{z,t}}\phi (g^*)'' dz+O(\e^2)+O(||\phi||^2_{C^{2,\theta}(B_r(y_0,0)\times I_{r^2})})\\
	&= \left(\left\langle \frac{\partial X_{\Gamma_{\e,t}}}{\partial t},\nu_{\Gamma_{\e,t}}\right\rangle+\frac{\partial h}{\partial t}\right)\int \phi (g^*)'' dz-\Delta_{\Gamma_{\e,0,t}}h\int \phi (g^*)'' dz\\
	&-\int W''(g^*)(g^*)'\phi dz+H_{\Gamma_{\e,0,t}}\int \phi (g^*)'' dz+O(\e^2)+O(||\phi||^2_{C^{2,\theta}(B_r(y_0,0)\times I_{r^2})})\\
	&= \left(H_{\Gamma_{\e,0,t}}+\left\langle \frac{\partial X_{\Gamma_{\e,t}}}{\partial t},\nu_{\Gamma_{\e,t}}\right\rangle+\frac{\partial h}{\partial t}-\Delta_{\Gamma_{\e,0,t}}h\right)\int \phi(g^*)'' dz\\
	&-\int W''(g^*)(g^*)'\phi dz+O(\e^2)+O(||\phi||^2_{C^{2,\theta}(B_r(y_0,0)\times I_{r^2})}).
	\end{align*}
In the above we also used the expression of the Laplacian in Fermi coordinates \eqref{FermiLaplacian} in the second equality and \eqref{g*Error} in the third equality. The change of sign of the mean curvature term is due to integration by parts and we are able to take the mean curvature term out of integral is due to the error estimates \eqref{CurvatureError}.

Next we compute in $B^n_r(y_0)\times I$, the right hand side (RHS)
	\begin{align*}
	&RHS\\
	&= -\int W''(g^*)(g^*)'\phi dz+O(||\phi||^2_{C^{2,\theta}(B_r(y_0,0)\times I)})+\int (g^*)'(g^*)''|\nabla_{\Gamma_{z,t}} h|^2 dz\\&+\int [(g^*)']^2\left[\frac{\partial h}{\partial t}-\Delta_{\Gamma_{z,t}}h+H_{\Gamma_{z,t}}+\left\langle \frac{\partial X_{\Gamma_t}}{\partial t},\nu_{\Gamma_t}\right\rangle\right] dz\\
	&= -\int W''(g^*)(g^*)'\phi dz+O(||\phi||^2_{C^{2,\theta}(B_r(y_0,0)\times I)})+\int (g^*)'(g^*)''|\nabla_{\Gamma_{z,t}} h|^2 dz\\&+\alpha\left[\frac{\partial h}{\partial t}-\Delta_{\Gamma_{0,t}}h+H_{\Gamma_{0,t}}+\left\langle \frac{\partial X_{\Gamma_t}}{\partial t},\nu_{\Gamma_t}\right\rangle\right]+\int[(g^*)']^2\left[-(\Delta_{\Gamma_{z,t}}-\Delta_{\Gamma_{0,t}})h+(H_{\Gamma_{z,t}}-H_{\Gamma_{0,t}})\right]dz\\
	&= -\int W''(g^*)(g^*)'\phi dz+O(||\phi||^2_{C^{2,\theta}(B_r(y_0,0)\times I)})+\int (g^*)'(g^*)''|\nabla_{\Gamma_{z,t}} h|^2 dz\\&+\alpha\left[\frac{\partial h}{\partial t}-\Delta_{\Gamma_{0,t}}h+H_{\Gamma_{0,t}}+\left\langle \frac{\partial X_{\Gamma_t}}{\partial t},\nu_{\Gamma_t}\right\rangle\right]+O(\e^2),
	\end{align*}
where we used the Taylor expansion of $W'$ and Lemma \ref{LemmaErrorInZ}.

Combining the above, we obtain
	\begin{align}\label{KeyTerm}
	\begin{split}
&\left[\frac{\partial h}{\partial t}-\Delta_{\Gamma_t}h+H_{\Gamma_{t}}+\left\langle \frac{\partial X_{\Gamma_t}}{\partial t},\nu_{\Gamma_t}\right\rangle\right]\left[\int \phi (g^*)'' dz-\alpha\right]+O(\e^2)\\
	&= O(\e^2)+\int (g^*)'(g^*)''|\nabla_{\Gamma_{z,t}} h|^2 dz+\|h\|^2_{C^{2,\theta}(B^n_r(y_0)\times I_{r^2})}\\
	&= O\left(\e^2\right)+O\left(\|\phi\|^2_{C^{2,\theta}(B_r(y_0,0)\times I_{r^2})}\right),
\end{split}
	\end{align}

In the last equality above, we used the fact that $\|h\|_{C^{2,\theta}}$ is controlled by $\|\phi\|_{C^{2,\theta}}$ as shown in \eqref{PhiBoundsh}, and we applied Cauchy's inequality to the middle term. The supremum norm estimates are obtained by integrating by parts and using the fact that the integral of $\bar{g}$ and its derivatives are uniformly bounded, independent of $\e$. Specifically, we have:
	\begin{align*}
	&\sup_{B_r(y_0)\times I_{r^2}}|II+IV|\\
	&= \sup_{B_r(y_0)\times I_{r^2}}\left|\left[\frac{\partial h}{\partial t}-\Delta_{\Gamma_{z,t}}h+H_{\Gamma_{z,t}}+\left\langle \frac{\partial X_{\Gamma_t}}{\partial t},\nu_{\Gamma_t}\right\rangle\right]\right|\\
	&\leq \sup_{B_r(y_0)\times I_{r^2}}\left|\left[\frac{\partial h}{\partial t}-\Delta_{\Gamma_{t}}h+H_{\Gamma_{t}}+\left\langle \frac{\partial X_{\Gamma_t}}{\partial t},\nu_{\Gamma_t}\right\rangle\right]\right|+\sup_{B_r(y_0)\times I_{r^2}}\left|\Delta_{\Gamma_{tz,}}h-\Delta_{\Gamma_{t}}h\right|\\
	&+\sup_{B_r(y_0)\times I_{r^2}}\left|\Delta_{\Gamma_{z,t}}h-\Delta_{\Gamma_{t}}h\right|\\
	&\leq O(\e^2)+O\left(\|\phi\|^2_{C^{2,\theta}(B_r(y_0,0)\times I_{r^2})}\right),
	\end{align*}
where we used Lemma \ref{LemmaErrorInZ} and \eqref{PhiBoundsh} to bound the last two terms in the third line.

\subsection{H\"older norm of II+IV}
Again by rewriting equation \eqref{IntegralEquation} using the orthogonality conditions \eqref{LeftLaplacian} and \eqref{LeftTime}, we have
	\begin{align*}
	&\int (g^*)'\left(\frac{\partial}{\partial t}-\Delta_{\Gamma_{z,t}}-\partial^2_z-H_{\Gamma_{z,t}}\partial_z\right)\phi dz\\
	&= \int [\Delta_{\Gamma_{0,t}}-\Delta_{\Gamma_{z,t}}]\phi (g^*)' dz\\
	&+\left(\left\langle \frac{\partial X_{\Gamma_t}}{\partial t},\nu_{\Gamma_t}\right\rangle+\frac{\partial h}{\partial t}\right)\int \phi (g^*)'' dz-\Delta_{\Gamma_{0,t}}h\int \phi (g^*)'' dz\\
	&+2\int \left\langle\frac{\partial\phi}{\partial y_i}, \frac{\partial h}{\partial y_j}\right\rangle_{g_{\Gamma_{y,0}}}(g^*)'' dz+|\nabla_{\Gamma_{0,t}}h|^2\int \phi (g^*)''' dz-\int (g^*)'(\partial_{zz}\phi+H_{\Gamma_{z,t}}\partial_z\phi) dz+O(\e^2)\\
	&= -\int (g^*)'[W'(\phi+g^*)-W'(g^*)] dz+\int [(g^*)']^2\left[\frac{\partial h}{\partial t}-\Delta_{\Gamma_{z,t}}h+H_{\Gamma_{z,t}}+\left\langle \frac{\partial X_{\Gamma_t}}{\partial t},\nu_{\Gamma_t}\right\rangle\right] dz\\
	&+\int (g^*)'[(g^*)''|\nabla_{z,t} h|^2] dz+O(\e^2),
	\end{align*}
where $|\int (g^*)'\bar\eta dz|=|\int (g^*)\bar\eta' dz|=O(\e^2)$ by the definition of $\bar\eta$ in \eqref{CutoffError}. Further simplification and some integration by parts gives
	\begin{align}\label{EquationForHolder}
	\begin{split}
&\int [\Delta_{\Gamma_{0,t}}-\Delta_{\Gamma_{z,t}}]\phi (g^*)' dz\\
	&+\left(\left\langle \frac{\partial X_{\Gamma_{\e,t}}}{\partial t},\nu_{\Gamma_{\e,t}}\right\rangle+\frac{\partial h}{\partial t}\right)\int \phi(g^*)'' dz-\Delta_{\Gamma_{0,t}}h\int \phi (g^*)'' dz\\
	&+2\int \left\langle \frac{\partial\phi}{\partial y_i}, \frac{\partial h}{\partial y_j}\right\rangle_{g_{\Gamma_{y,0}}}(g^*)'' dz+|\nabla_{\Gamma_{0,t}}h|^2\int \phi (g^*)''' dz-\int (g^*)'H_{\Gamma_{z,t}}\partial_z\phi dz\\
	&= O(\e^2)+\int(g^*)'\mathcal R(\phi)dz+\left[\frac{\partial h}{\partial t}-\Delta_{\Gamma_{0,t}}h+H_{\Gamma_{0,t}}+\left\langle \frac{\partial X_{\Gamma_t}}{\partial t},\nu_{\Gamma_t}\right\rangle\right]\left(\int g'^2 dz\right)\\
	&-\int [H_{\Gamma_{z,t}}-H_{\Gamma_{0,t}}] [(g^*)']^2 dz+\int [\Delta_{\Gamma_{z,t}}h-\Delta_{\Gamma_{0,t}}h] [(g^*)']^2 dz+\int (g^*)'(g^*)''|\nabla_{z,t} h|^2 dz.
\end{split}
	\end{align}
where we used \eqref{g*Error}. Here the remainder term $\mathcal R(\phi)$ is a polynomial in $\phi$ with vanishing constant and linear term.

\subsubsection{H\"older estimates in space}

We will estimate the spatial H\"older norms in \eqref{EquationForHolder} term by term and use the notation in \eqref{Notations} for domains.

Since $\mathcal R(\phi)$ has vanishing constant and linear term, we have
	\begin{align*}
	\sup_{t\in I_{r^2}}\left\|\int(g^*)'\mathcal R(\phi)dz\right\|_{C^\theta(B^n_r(y_0))}\lesssim \|\phi\|^2_{C^{2,\theta}(B_r(y_0,0)\times I_{r^2})}.
	\end{align*}
Since the $C^{k,\theta}$ norms of $\phi$ control the $C^{k,\theta}$ norms of $h$ by \eqref{PhiBoundsh}, we have
	\begin{align*}
	&\sup_{t\in I_{r^2}}\left\|\Delta_{\Gamma_{0,t}}h(\cdot,t)\left(\int\phi(\cdot,z,t) (g^*)'' (\cdot,z,t)dz\right)\right\|_{C^\theta(B_r(y_0))}\\
	&\lesssim \sup_{t\in I_{r^2}}\|h(\cdot,t)\|_{C^{2,\theta}(B_r(y_0))}\cdot \sup_{t\in I}\|\phi(\cdot,t)\|_{C^{0,\theta}(B_r(y_0,0))}\\
	&\lesssim\sup_{t\in I_{r^2}}\|\phi(\cdot,t)\|^2_{C^{2,\theta}(B_r(y_0,0))}\\
	&\lesssim \|\phi\|^2_{C^{2,\theta}(B_r(y_0,0)\times I_{r^2})}.
	\end{align*}
By the error estimates \eqref{LaplacianError}, \eqref{MetricError} and \eqref{PhiBoundsh}
	\begin{align}\label{HolderLaplacianForPhi}
	&\sup_{t\in I_{r^2}}\left\|\int [\Delta_{\Gamma_{0,t}}-\Delta_{\Gamma_{z,t}}]\phi(\cdot,z,t)(g^*)'(\cdot,z,t) dz\right\|_{C^\theta(B^n_r(y_0))}\\\nonumber
	&\lesssim \left\|[\Delta_{\Gamma_{0,t}}-\Delta_{\Gamma_{z,t}}]\phi(\cdot,z,t)\right\|_{C^\theta(B^n_r(y_0))}+\|\phi(\cdot,t)\|_{C^{2,\theta}(B_r(y_0,0))}\cdot \|(g^*)'(\cdot,t)\|_{C^{\theta}(B_r(y_0,0))}\\\nonumber
	&=\lesssim\sup_{v,w\in B_r^n(y_0)}\frac{[\Delta_{\Gamma_{0,t}}-\Delta_{\Gamma_{z,t}}]\phi(v,z,t)-[\Delta_{\Gamma_{0,t}}-\Delta_{\Gamma_{z,t}}]\phi(w,z,t)}{|v-w|^\theta}\\\nonumber
	&+\|\phi(\cdot,t)\|_{C^{2,\theta}(B_r(y_0,0))}\cdot \|h(\cdot,t)\|_{C^{2,\theta}(B_r(y_0))}\\\nonumber
	&\lesssim\sup_{v,w\in B_r^n(y_0)}\left|\frac{     \sum_{i,j=1}^n(g^{ij}_{\Gamma_{z,t}}(v)-g^{ij}_{\Gamma_{0,t}}(v) ) \frac{\partial^2\phi}{\partial v_iv_j} - \sum_{i,j=1}^n(g^{ij}_{\Gamma_{z,t}}(w)-g^{ij}_{\Gamma_{0,t}}(w) ) \frac{\partial^2\phi}{\partial w_iw_j}   }{|v-w|^\theta}\right|\\\nonumber
	&+\sup_{v,w\in B_r^n(y_0)}\left|\frac{     \sum_{i=1}^n(b^i_{\Gamma_{z,t}}(v)-b^i_{\Gamma_{0,t}}(v) ) \frac{\partial\phi}{\partial v_i} -  \sum_{i=1}^n(b^i_{\Gamma_{z,t}}(w)-b^i_{\Gamma_{0,t}}(w) ) \frac{\partial\phi}{\partial w_i}   }{|v-w|^\theta}\right|\\\nonumber
	&+\sup_{t\in I_{r^2}}\|\phi(\cdot,t)\|^2_{C^{2,\theta}(B_r(y_0,0))}\\\nonumber
	&\lesssim\sup_{v,w\in B_r^n(y_0)}\left|\frac{     \sum_{i,j=1}^n \left(g^{ij}_{\Gamma_{z,t}}(v)-g^{ij}_{\Gamma_{0,t}}(v) \right) \left(\frac{\partial^2\phi}{\partial v_iv_j} - \frac{\partial^2\phi}{\partial w_iw_j     }\right)  }{|v-w|^\theta}\right|\\\nonumber
	&+\sup_{v,w\in B_r^n(y_0)}\left|\frac{     \sum_{i,j=1}^n \left[\left(g^{ij}_{\Gamma_{z,t}}(v)-g^{ij}_{\Gamma_{z,t}}(w) \right)-\left(g^{ij}_{\Gamma_{0,t}}(v)-g^{ij}_{\Gamma_{0,t}}(w) \right) \right] \frac{\partial^2\phi}{\partial w_iw_j     } }{|v-w|^\theta}\right|\\\nonumber
	&+\sup_{v,w\in B_r^n(y_0)}\left|\frac{     \sum_{i=1}^n(b^i_{\Gamma_{z,t}}(v)-b^i_{\Gamma_{0,t}}(v) ) \left(\frac{\partial\phi}{\partial v_i} - \frac{\partial\phi}{\partial w_i} \right)   }{|v-w|^\theta}\right|\\\nonumber
	&+\sup_{v,w\in B_r^n(y_0)}\left|\frac{     \sum_{i=1}^n\left[(b^i_{\Gamma_{z,t}}(v)-b^i_{\Gamma_{z,t}}(w) )- (b^i_{\Gamma_{0,t}}(v)-b^i_{\Gamma_{0,t}}(w) )\right]\frac{\partial\phi}{\partial w_i}   }{|v-w|^\theta}\right|\\\nonumber
	&+\sup_{t\in I_{r^2}}\|\phi(\cdot,t)\|^2_{C^{2,\theta}(B_r(y_0,0))}\\\nonumber
	&\lesssim\e\sup_{t\in I_{r^2}}\|\phi(\cdot,t)\|_{C^{2,\theta}(B_r(y_0,0))}+\e\sup_{t\in I_{r^2}}\|\phi(\cdot,t)\|_{C^{2}(B_r(y_0,0))}+\e\sup_{t\in I_{r^2}}\|\phi(\cdot,t)\|_{C^{1,\theta}(B_r(y_0,0))}\\\nonumber
	&+\e\sup_{t\in I_{r^2}}\|\phi(\cdot,t)\|_{C^{1}(B_r(y_0,0))}+\sup_{t\in I_{r^2}}\|\phi(\cdot,t)\|^2_{C^{2,\theta}(B_r(y_0,0))}\\\nonumber
	&\lesssim \e^2+\|\phi\|^2_{C^{2,\theta}(B_r(y_0,0)\times I)},
	\end{align}
	where $b^i_{\Gamma_{z,t}}(y)=\frac{1}{2}\sum_{j=1}^ng^{ij}_{\Gamma_{z,t}}(y)\frac{\partial}{\partial y_j}\log\det\left((g_{ij})_{\Gamma_{z,t}}(y)\right)$ above.
	
We also estimate the term
	\begin{align*}
	&\sup_{t\in I_{r^2}}\left\|\left(\left\langle \frac{\partial X_{\Gamma_t}}{\partial t},\nu_{\Gamma_t}\right\rangle+\frac{\partial h}{\partial t}\right)(\cdot,t)\left(\int\phi(\cdot,z,t) (g^*)'' (\cdot,z,t)dz\right)\right\|_{C^\theta(B^n_r(y_0))}\\
	&\lesssim \sup_{t\in I_{r^2}}\left\|\left(\left\langle \frac{\partial X_{\Gamma_t}}{\partial t},\nu_{\Gamma_t}\right\rangle+\frac{\partial h}{\partial t}\right)(\cdot,t)\right\|^2_{C^{\theta}(B^n_r(y_0))}+\sup_{t\in I_{r^2}}\left\|\int\phi (\cdot,z, t)(g^*)''(\cdot,z,t) dz\right\|^2_{C^{\theta}(B^n_r(y_0))}\\
	&\lesssim \sup_{t\in I_{r^2}}\left\|\left\langle \frac{\partial X_{\Gamma_t}}{\partial t},\nu_{\Gamma_t}\right\rangle(\cdot, t)\right\|^2_{C^{\theta}(B_r(y_0))}+\sup_{t\in I}\left\|\frac{\partial h}{\partial t}(\cdot, t)\right\|^2_{C^{\theta}(B^n_r(y_0))}\\
	&+\sup_{t\in I_{r^2}}\left\|\int\phi (\cdot,z, t)(g^*)''(\cdot,z,t) dz\right\|^2_{C^{\theta}(B^n_r(y_0))}\\
	&\lesssim \e^2+\|h\|^2_{C^{2,\theta}(B^n_r(y_0)\times I_{r^2})}+\sup_{t\in I}\|\phi(\cdot, t)\|^2_{C^{\theta}(B_r(y_0,0))}\\
	&\lesssim \e^2+\|\phi\|^2_{C^{2,\theta}(B_r(y_0,0)\times I_{r^2})},
	\end{align*}
where the bound
	\begin{align*}
	&\sup_{t\in I_{r^2}}\left\|\left\langle \frac{\partial X_{\Gamma_t}}{\partial t},\nu_{\Gamma_t}\right\rangle(\cdot, t)\right\|^2_{C^{\theta}(B^n_r(y_0))}\\
	&=\sup_{t\in I_{r^2}}\left\|\frac{\frac{\partial u}{\partial t}}{|\nabla u|}(\cdot, t)\right\|^2_{C^{\theta}(B^n_r(y_0))}\\
	&\lesssim \sup_{t\in I_{r^2}}\left\| \frac{\partial u}{\partial t}(\cdot, t)\right\|^2_{C^{\theta}(B^n_r(y_0))}+ \sup_{t\in I_{r^2}}\left|\frac{\partial u}{\partial t}\right| ^2\|u\|^2_{C^{2,\theta}(B^n_{r'}(y_0)\times I_{(r')^2})}    \\
	&\lesssim \sup_{t\in I}\left\|\frac{\partial}{\partial t}u(\cdot,0,t)\right\|^2_{C^{\theta}(B^n_r(y_0))}\\
	&\lesssim \e^2+\|\phi\|^2_{C^{2,\theta}(B^n_{r'}(y_0)\times I_{(r')^2})}
	\end{align*}
follows by Lemma \ref{LemmaEstimateDifferenceToFlatSolution} and the uniform lower gradient bound in Theorem \ref{GradientUpperLowerBound}.

Similarly, using \eqref{PhiBoundsh}, we get
	\begin{align*}
	&\sup_{t\in I_{r^2}}\left\|\int \left\langle\frac{\partial\phi}{\partial y_i}(\cdot,z,t), \frac{\partial h}{\partial y_j}(\cdot,t)\right\rangle_{g_{\Gamma_{0,t}}}(g^*)''(\cdot,z,t)dz\right\|_{C^\theta(B^n_r(y_0))}\\
	&\lesssim\sup_{t\in I_{r^2}} \|h(\cdot,t)\|_{C^{1,\theta}(B_r(y))}\cdot\|\phi(\cdot,t)\|_{C^{1,\theta}(B_r(y_0,0))}\\
	&\lesssim\sup_{t\in I_{r^2}}\|\phi(\cdot,t)\|^2_{C^{1,\theta}(B_r(y_0,0))}\\
	&\lesssim\|\phi\|^2_{C^{2,\theta}(B_r(y_0,0)\times I_{r^2})},
	\end{align*}
and
	\begin{align*}
	&\sup_{t\in I_{r^2}}\left\||\nabla_{\Gamma_{0,t}}h(\cdot,t)|^2\int\phi (\cdot,t) (g^*)'''(\cdot,z,t)dz\right\|_{C^\theta(B^n_r(y_0))}\\
	&\lesssim\sup_{t\in I_{r^2}}\|h(\cdot,t)\|^2_{C^{1,\theta}(B^n_r(y_0))}\cdot\sup_{t\in I_{r^2}}\|\phi(\cdot,t)\|_{C^{\theta}(B_r(y_0,0))}\\
	&\lesssim\|\phi\|^2_{C^{2,\theta}(B_r(y_0,0)\times I_{r^2})}.
	\end{align*}
Using \eqref{CurvatureError}, \eqref{MetricError}, Lemma \ref{LemmaErrorInZ} and integration by parts, we have
	\begin{align*}
	&\sup_{t\in I_{r^2}}\left\|\int (g^*)'(\cdot,z,t)H_{\Gamma_{z,t}}\partial_z\phi(\cdot,z,t) dz\right\|_{C^\theta(B^n_r(y_0))}\\
	&\leq C\sup_{t\in I_{r^2}}\left\|\int (g^*)'(\cdot,z,t)\partial_zH_{\Gamma_{z,t}}\phi(\cdot,z,t) + \partial_z(g^*)'(\cdot,z,t)H_{\Gamma_{z,t}}\phi(\cdot,z,t)dz\right\|_{C^\theta(B^n_r(y_0))}\\
	&\lesssim\e\sup_{t\in I_{r^2}}\|\phi(\cdot,t)\|_{C^{2,\theta}(B_r(y_0,0))}\\
	&\lesssim\e^2+\|\phi\|^2_{C^{2,\theta}(B_r(y_0,0)\times I_{r^2})}.
	\end{align*}
By the uniform bounds on derivatives of $\bar g$, \eqref{MetricError} and Lemma \ref{PhiBoundsh}
	\begin{align*}
	&\sup_{t\in I_{r^2}}\left\|\int (g^*)'(\cdot,z,t)(g^*)''(\cdot,z,t)|\nabla_{\Gamma_{z,t}} h(\cdot,t)|^2 dz\right\|_{C^\theta(B^n_r(y_0))}\\
	&\lesssim\|\sup_{t\in I}h(\cdot,t)\|^2_{C^{1,\theta}(B^n_r(y_0))}\\
	&\lesssim\|\phi\|^2_{C^{2,\theta}(B_r(y_0,0)\times I_{r^2})}.
	\end{align*}
Again, similar to the estimate carried out in \eqref{HolderLaplacianForPhi}, by \eqref{LaplacianError} and \eqref{CurvatureError}, we have
	\begin{align*}
	&\sup_{t\in I_{r^2}}\left\|\int[\Delta_{\Gamma_{z,t}}h(\cdot,t)-\Delta_{\Gamma_{0,t}}h(\cdot,t)] [(g^*)'(\cdot,z,t)]^2 dz\right\|_{C^\theta(B^n_r(y_0))}\\
	&\lesssim\e\sup_{t\in I_{r^2}}\|h(\cdot,t)\|_{C^{2,\theta}(B^n_r(y_0))}\\
	&\lesssim\e^2+\|h\|^2_{C^{2,\theta}(B^n_r(y_0)\times I_{r^2})}\\
	&\lesssim\e^2+\|\phi\|^2_{C^{2,\theta}(B_r(y_0,0)\times I_{r^2})},
	\end{align*}
and
	\begin{align*}
	\sup_{t\in I_{r^2}}\left\|\int[H_{\Gamma_{z,t}}-H_{\Gamma_{0,t}}] [(g^*)'(\cdot,z,t)']^2 dz\right\|_{C^\theta(B^n_r(y_0))}= O(\e^2).
	\end{align*}
Combining all these above estimates and
	\begin{align*}
	&\left\|\left[\frac{\partial h}{\partial t}(\cdot,t)-\Delta_{\Gamma_t}h(\cdot,t)+H_{\Gamma_t}+\left\langle \frac{\partial X_{\Gamma_t}}{\partial t},\nu_{\Gamma_t}\right\rangle\right]\left(\int (g^*)'^2(\cdot,z,t) dz\right)\right\|_{C^{0,\theta}(B^n_r(y_0))}\\
	&= \alpha\left\|\frac{\partial h}{\partial t}(\cdot,t)-\Delta_{\Gamma_t}h(\cdot,t)+H_{\Gamma_t}+\left\langle \frac{\partial X_{\Gamma_t}}{\partial t},\nu_{\Gamma_t}\right\rangle\right\|_{C^{0,\theta}(B^n_r(y_0))}+O(\e^2),
	\end{align*}
we obtain from \eqref{EquationForHolder}
	\begin{align}\label{SpaceHolder}
	\begin{split}
&\sup_{t\in I_{r^2}}\left\|\left[\frac{\partial h}{\partial t}(\cdot,t)-\Delta_{\Gamma_t}h(\cdot,t)+H_{\Gamma_t}+\left\langle \frac{\partial X_{\Gamma_t}}{\partial t},\nu_{\Gamma_t}\right\rangle\right]\right\|_{C^{0,\theta}(B^n_r(y_0))}\\
	&\lesssim\e^2+\|\phi\|^2_{C^{2,\theta}( B_r(y_0,0)\times I_{r^2})}.
\end{split}
	\end{align}

\subsubsection{H\"older estimates in time}
Again we estimate the H\"older norm in time term by term and use the notation in \eqref{Notations} for domains. By \eqref{LaplacianError},
	\begin{align*}
	&\sup_{y\in B^n_r(y_0)}\left\|\int[\Delta_{\Gamma_{0,\cdot}}-\Delta_{\Gamma_{z,\cdot}}]\phi (y,z,\cdot) (g^*)' (y,z,\cdot)dz\right\|_{C^\frac{\theta}{2}(I_{r^2})}\\
	&\lesssim\sup_{y\in B^n_r(y_0)}\|[\Delta_{\Gamma_{0,\cdot}}-\Delta_{\Gamma_{z,\cdot}}]\phi (y,0,\cdot)\|_{C^\frac{\theta}{2}(I_{r^2})}\\
	&+\sup_{(y,z)\in B_r(y_0,0)}\left|[\Delta_{\Gamma_{0,\cdot}}-\Delta_{\Gamma_{z,\cdot}}]\phi (y,0,\cdot)\right|\|h(y,\cdot)\|_{C^\frac{\theta}{2}(I_{r^2})} \\
	&\lesssim\e\|\phi\|_{C^{2,\theta}(B_{r'}(y_0,0)\times I_{(r')^2})}\\
	&\lesssim\e^2+\|\phi\|^2_{C^{2,\theta}(B_{r'}(y_0,0)\times I_{(r')^2})}.
	\end{align*}
By \eqref{HolderSmallnessRescaled} and \eqref{PhiBoundsh}
	\begin{align*}
	&\sup_{y\in B_r^n(y_0)}\left\| \left(\left\langle \frac{\partial X_{\Gamma_{0,\cdot}}}{\partial t},\nu_{\Gamma_{0,\cdot}}\right\rangle+\frac{\partial h}{\partial t}\right)(y,\cdot)\int\phi(y,z,\cdot) (g^*)'' (y,z,\cdot)dz \right\|_{C^\frac{\theta}{2}(I_{r^2})}\\
	&\lesssim\sup_{y\in B_r^n(y_0)}\left\|\left(\left\langle \frac{\partial X_{\Gamma_{0,\cdot}}}{\partial t},\nu_{\Gamma_{0,\cdot}}\right\rangle+\frac{\partial h}{\partial t}\right)(y,\cdot)\right\|^2_{C^\frac{\theta}{2}(I_{r^2})}+\sup_{y\in B^n_r(y_0)}\left\|\int\phi(y,z,\cdot) (g^*)''(y,z,\cdot) dz \right\|^2_{C^\frac{\theta}{2}(I_{r^2})}\\
	&\lesssim\sup_{y\in B_r^n(y_0)}\left\|\left\langle \frac{\partial X_{\Gamma_{0,\cdot}}}{\partial t},\nu_{\Gamma_{0,\cdot}}\right\rangle(y,\cdot)\right\|^2_{C^\frac{\theta}{2}(I_{r^2})}+\sup_{y\in B_r^n(y_0)}\left\|\frac{\partial h}{\partial t}(y,\cdot)\right\|^2_{C^\frac{\theta}{2}(I_{r^2})}\\
	&+\sup_{(y,z)\in B_r(y_0,0)}\|\phi(y,z,\cdot)\|^2_{C^\frac{\theta}{2}(I_{r^2})}\\
	&\lesssim\sup_{y\in B_r^n(y_0)}\left\|\frac{\partial u}{\partial t}(y,0,\cdot)\right\|^2_{C^\frac{\theta}{2}(I_{r^2})}+\sup_{y\in B_r^n(y_0)}\left\|\frac{\partial h}{\partial t}(y,\cdot)\right\|^2_{C^\frac{\theta}{2}(I_{r^2})}\\
	&+\sup_{(y,z)\in B_r(y_0,0)}\|\phi(y,z,\cdot)\|^2_{C^\frac{\theta}{2}(I_{r^2})}\\
	&\lesssim \e^2+\|\phi\|^2_{C^{2,\theta}(B_{r'}(y_0,0)\times I_{(r')^2})},
	\end{align*}
where we used Lemma \ref{LemmaEstimateDifferenceToFlatSolution} to bound $\left\|\frac{\partial u}{\partial t}(y,\cdot)\right\|^2_{C^\frac{\theta}{2}(I_{r^2})}=\left\|\frac{\partial (u-\tilde g)}{\partial t}(y,\cdot)\right\|^2_{C^\frac{\theta}{2}(I_{r^2})}$ in the second last line.

By \eqref{PhiBoundsh} again,
	\begin{align*}
	&\sup_{y\in B_r^n(y_0)}\left\|\Delta_{\Gamma_{0,\cdot}}h(y,\cdot)\int\phi(y,z,\cdot) (g^*)''(y,z,\cdot) dz\right\|_{C^\frac{\theta}{2}(I_{r^2})}\\
	&\lesssim\sup_{y\in B_r^n(y_0)}\|\Delta_{\Gamma_{0,\cdot}}h\|^2_{C^\frac{\theta}{2}(I_{r^2})}+\sup_{y\in B_r^n(y_0)}\left\|\int\phi(y,z,\cdot) (g^*)''(y,z,\cdot) dz\right\|^2_{C^\frac{\theta}{2}(I_{r^2})}\\
	&\lesssim \|h\|^2_{C^{2,\theta}(B_r(y_0)\times I_{r^2})}+\|\phi\|^2_{C^{2,\theta}(B_r(y_0,0)\times I_{r^2})}\\
	&\lesssim \|\phi\|^2_{C^{2,\theta}(B_r(y_0,0)\times I_{r^2})},
	\end{align*}
	\begin{align*}
	&\sup_{y\in B_r^n(y_0)}\left\|\int \left\langle\frac{\partial\phi}{\partial y_i}(y,z,\cdot), \frac{\partial h}{\partial y_j}(y,\cdot)\right\rangle_{g_{\Gamma_{0,\cdot}}}(g^*)''(y,z,\cdot) dz\right\|_{C^\frac{\theta}{2}(I_{r^2})}\\
	&\lesssim \|\phi\|_{C^{1,\theta}(B_r(y_0,0)\times I_{r^2})}\cdot \|h\|_{C^{1,\theta}(B_r(y_0)\times I_{r^2})}\\
	&\lesssim\|\phi\|^2_{C^{2,\theta}(B_r(y_0,0)\times I_{r^2})},
	\end{align*}
	\begin{align*}
	&\sup_{y\in B_r^n(y_0)}\left\| |\nabla_{\Gamma_{0,\cdot}}h(y,\cdot)|\int\phi(y,z,\cdot) (g^*)''' (y,z,\cdot)dz\right\|_{C^\frac{\theta}{2}(I_{r^2})}\\
	&\lesssim\|h\|_{C^{1,\theta}(B_r^n(y_0)\times I_{r^2})})\cdot \|\phi\|_{C^{1,\theta}B_r(y_0,0)\times I_{r^2})}\\
	&\lesssim\|\phi\|^2_{C^{2,\theta}(B_r(y_0,0)\times I_{r^2})},
	\end{align*}
and
	\begin{align*}
	&\sup_{y\in B_r^n(y_0)}\left\|\int (g^*)'H_{\Gamma_{z,t}}\partial_z\phi(y,z,\cdot) dz\right\|_{C^\frac{\theta}{2}(I_{r^2})}\\
	&\lesssim\e\cdot\|\phi\|_{C^{1,\theta}B_r(y_0,0)\times I_{r^2})}\\
	&\lesssim\e^2+\|\phi \|^2_{C^{2,\theta}(B_r(y_0,0)\times I_{r^2})}.
	\end{align*}
By the uniform smallness of deviation in the $z$ coordinate \eqref{CurvatureError} and \eqref{CurvatureDerivativeError}
	\begin{align*}
	&\sup_{y\in B_r^n(y_0)}\left\|\int[H_{\Gamma_{z,t}}-H_{\Gamma_{0,t}}](g^*)'^2(y,z,\cdot) dz\right\|_{C^\frac{\theta}{2}(I_{r^2})}\\
	&\lesssim \sup_{y\in B_r^n(y_0), t\in I_{(r')^2}}\left|\int\frac{\partial}{\partial t}\left[(H_{\Gamma_{z,t}}-H_{\Gamma_{0,t}})(g^*)'^2(y,z,\cdot)\right] dz\right|\\
	&\lesssim \e^2+\|\phi \|^2_{C^{2,\theta}(B_{r'}(y_0,0)\times I_{(r')^2})},
	\end{align*}
and by \eqref{LaplacianHolderError}, \eqref{PhiBoundsh}
	\begin{align*}
	&\sup_{y\in B_r^n(y_0)}\left\|\int[\Delta_{\Gamma_{z,\cdot}}h(y,\cdot)-\Delta_{\Gamma_{0,\cdot}}h(y,\cdot)](g^*)'^2(y,z,\cdot) dz\right\|_{C^\frac{\theta}{2}(I_{r^2})}\\
	&\lesssim\e^2+\|\phi \|^2_{C^{2,\theta}(B_{r'}(y_0,0)\times I_{(r')^2})},
	\end{align*}
	\begin{align*}
	&\sup_{y\in B_r^n(y_0)}\left\|\int g'g''|\nabla_{z,t}h(y,\cdot)|^2 dz\right\|_{C^\frac{\theta}{2}(I_{r^2})}\\
	&\lesssim\e^2+\|\phi \|^2_{C^{2,\theta}(B_{r'}(y_0,0)\times I_{(r')^2})}.
	\end{align*}
And thus similar to \eqref{SpaceHolder}, we obtain
	\begin{align}\label{TimeHolder}
	\begin{split}
&\sup_{y\in B_r^n(y_0)}\left\| \left[\frac{\partial h}{\partial t}-\Delta_{\Gamma_{0,\cdot}}h+H_{\Gamma_{0,\cdot}}+\left\langle \frac{\partial X_{\Gamma_{0,\cdot}}}{\partial t},\nu_{\Gamma_{0,\cdot}}\right\rangle\right](y,\cdot)\right\|_{C^{\frac{\theta}{2}}(I_{r^2})}\\
	&\lesssim\e^2+\|\phi\|^2_{C^{2,\theta}(B_{r'}(y_0,0)\times I_{(r')^2})}.
\end{split}
	\end{align}



From \eqref{SpaceHolder}, \eqref{TimeHolder}, we put back the omitted superscript $\e$ and obtain H\"older estimates for the term II+IV in space-time
	\begin{align}\label{HolderEstimate}
	\begin{split}
&\left\|\left[\frac{\partial h}{\partial t}-\Delta_{\Gamma^\e_{0,\cdot}}h+H_{\Gamma^\e_{0,\cdot}}+\left\langle \frac{\partial X_{\Gamma^\e_{0,\cdot}}}{\partial t},\nu_{\Gamma^\e_{0,\cdot}}\right\rangle\right]\right\|_{C^{0,\theta}(B_r^n(y_0)\times I_{r^2})}\\
	&\lesssim \e^2+\|\phi^\e\|^2_{C^{2,\theta}(B_{r'}(y_0,0)\times I_{(r')^2})}\\
	&\lesssim\e^2+\sigma\|\phi^\e\|_{C^{2,\theta}(B_{r'}(y_0,0)\times I_{(r')^2})}
\end{split}
	\end{align}
where $\sigma=o(1)$ is a small coefficient since the norm of $\phi^\e$ is small. This follows from the assumption of a uniform bound on $\mathcal A$ for $u_\e$ in Theorem \ref{ImprovementRegularity}. The rescaled solution sequence $u^\e$ is a solution of \eqref{AC}. As shown in Section \ref{ApproximateSolution}, all the derivatives of $\phi$ converge to $0$ uniformly, which implies that $|\phi^\e|_{C^{2,\theta}}=o(1)$ for small $\e$. This will be used later in an iteration argument.
\section{Parabolic Schauder estimates for $\phi$ and regularity of the level sets, the proof of main theorems}\label{ProofMainTheorem}
Rewriting the equation \eqref{Difference}, we get
	\begin{align*}
	\left(\frac{\partial}{\partial t}-\Delta\right)\phi^\e+W''(g^{\e,*})\phi=&(g^{\e,*})'\left(\frac{\partial h}{\partial t}-\Delta_{\Gamma^\e_{0,t}}h+H_{\Gamma^\e_{0,t}}+\left\langle \frac{\partial X_{\Gamma^\e_t}}{\partial t},\nu_{\Gamma^\e_t}\right\rangle\right)\\&+(g^{\e,*})''|\nabla h|^2+\bar\eta+\mathcal R(\phi),
	\end{align*}
	where the remainder term $\mathcal R$ is of order $O(|\phi|^2)$.
	
By applying standard parabolic Schauder estimates (see chapter 4 of \cite{lieberman1996second} for a reference) to the above equation, we get by \eqref{PhiBoundsh}
	\begin{align*}
	&\|\phi^\e\|_{C^{2,\theta}(D_r(y)\times I_{r^2})}\\
	&\lesssim\left\|\frac{\partial h}{\partial t}-\Delta_{\Gamma^\e_{0,t}}h+H_{\Gamma^\e_{0,t}}+C\left\langle \frac{\partial X_{\Gamma^\e_t}}{\partial t},\nu_{\Gamma^\e_t}\right\rangle\right\|_{C^{0,\theta}(B_{r'}(y_0)\times I_{(r')^2})}+\|h\|^2_{C^{2,\theta}(B_{r'}(y_0)\times I_{(r')^2})}+\e^2\\
	&\lesssim \left\|\frac{\partial h}{\partial t}-\Delta_{\Gamma^\e_{0,t}}h+H_{\Gamma^\e_{0,t}}+\left\langle \frac{\partial X_{\Gamma^\e_t}}{\partial t},\nu_{\Gamma^\e_t}\right\rangle\right\|_{C^{0,\theta}(B_{r'}(y_0)\times I_{(r')^2})}+\sigma\|\phi^\e\|_{C^{2,\theta}(B_{r'}(y_0,0)\times I_{(r')^2})}+\e^2
	\end{align*}
where $\sigma<1$ is a small constant.

Combining this with the estimate \eqref{HolderEstimate}, we get
	\begin{align*}
	&\left\|\frac{\partial h}{\partial t}-\Delta_{\Gamma^\e_{0,t}}h+H_{\Gamma^\e_{0,t}}+\left\langle \frac{\partial X_{\Gamma^\e_t}}{\partial t},\nu_{\Gamma^\e_t}\right\rangle\right\|_{C^{0,\theta}(B_r(y_0)\times I_{r^2})}+\|\phi^\e\|_{C^{2,\theta}(B_r(y_0,0)\times I_{r^2})}\\
	&\lesssim \e^2+\sigma\left(\left\|\frac{\partial h}{\partial t}-\Delta_{\Gamma^\e_{0,t}}h+H_{\Gamma^\e_{0,t}}+\left\langle \frac{\partial X_{\Gamma^\e_t}}{\partial t},\nu_{\Gamma^\e_t}\right\rangle\right\|_{C^{0,\theta}(B_{r'}(y_0)\times I_{(r')^2})}+\|\phi^\e\|_{C^{2,\theta}(B_{r'}(y_0,0)\times I_{(r')^2})}\right).
	\end{align*}
We can then iterate this inequality $K>|\log\e|$ times (so that $\sigma^K=O(\e)$), and obtain
	\begin{align*}
	&\left\|\frac{\partial h}{\partial t}-\Delta_{\Gamma^\e_{0,t}}h+H_{\Gamma^\e_{0,t}}+\left\langle \frac{\partial X_{\Gamma^\e_t}}{\partial t},\nu_{\Gamma^\e_t}\right\rangle\right\|_{C^{0,\theta}(B_r(y_0)\times I_{r^2})}+\|\phi^\e\|_{C^{2,\theta}(B_r(y_0,0)\times I_{r^2})}\\
	&= O(\e^2|\log\e|)= O(\e^{2-\delta}),
	\end{align*}
for any $\delta>0$.

Thus by \eqref{PhiBoundsh}, we have
\begin{equation*}
\left\|H_{\Gamma^\e_{0,t}}+\left\langle \frac{\partial X_{\Gamma^\e_{0,t}}}{\partial t},\nu_{\Gamma^\e_{0,t}}\right\rangle\right\|_{C^{0,\theta}(B_r(y_0)\times I_{r^2})}= O(\e^{2-\delta}), \forall\delta>0.
\end{equation*}
After rescaling back to the original scale we have
\begin{equation*}
\left\|H_{\Gamma_{\e,0,t}}+\left\langle \frac{\partial X_{\Gamma_{\e,0,t}}}{\partial t},\nu_{\Gamma_{\e,0,t}}\right\rangle\right\|_{C^{0,\theta}(B_r(y_0)\times I_{r^2})}= O(\e^{1-\theta})\leq C_\theta,
\end{equation*}
for any $\theta\in(0,1)$.

By the curvature bound $|\mathcal A_\e|\leq C$, the nodal sets $\Gamma_{\e,0,t}$ can be represented by local graphs over $\Gamma_{\e,0,t}\cap \{B_r\times I_{r^2}\}=\{x_{n+1}=f_\e\}\cap \{B_r\times I_{r^2}\}$ with bounded gradient $|\nabla f_\e|\leq \tilde C$, so the parabolic Schauder estimates in Lemma \ref{ParabolicSchauder} gives
	\begin{align}\label{UniformEstimate}
	\|f_\e\|_{C^{2,\theta}(B_r(y_0)\times I)}\leq C_\theta,
	\end{align}
for any $\theta\in(0,1)$

\begin{proof}[Proof of Theorem \ref{ImprovementRegularity}]
We first perform a parabolic translation in space-time for $u_\e$ so that $u_\e(0,0)=0$ as in Remark \ref{VanishingAtZero}, which we still denote by $u_\e$. By applying the curvature bound and the entropy bound $\lambda<2\alpha-\delta$, we obtain a smooth limit flow with multiplicity $1$. The proof of Theorem \ref{ImprovementRegularity} is essentially completed through the last equation \eqref{UniformEstimate}. This equation provides a uniform $C^{2,\theta}$ estimate, which is obtained by combining a uniform enhanced second fundamental form bound with the aforementioned entropy bound. Consequently, we can obtain a uniform $C^{0,\theta}$ norm for the curvature of the level sets.
\end{proof}

\begin{proof}[Proof of Theorem \ref{Graphical}]
This argument is similar to the proof of Corollary 1.2 in \cite{Wang2019} where the conditions in Theorem \ref{Graphical} imply the conditions (uniform enhanced second fundamental form bounds) in Theorem \ref{ImprovementRegularity} by a blow up argument.
\end{proof}

\section{Proof of curvature estimates}\label{ProofCurvature}
In this section we prove the a priori bound on enhanced second fundamental forms for low entropy Allen--Cahn flows.
\begin{proof}[Proof of Corollary \ref{CurvatureEstimates}]
Again we argue by contradiction. Let us assume there exists a sequence $\e_i\rightarrow0$ and a sequence of solutions $u_{\e_k}$ to the equation \eqref{EAC} in $B_r(0)\times[-r^2,r^2]\subset\mathbb R^2\times\mathbb R$ with $\e=\e_k$ such that $u(0,0)=0$. After passing to a subsequence, we can obtain a limit Brakke flow $d\mu_{\e_k,t}=\left[\frac{\e_k|\nabla u_{\e_k}(\cdot,t)|^2}{2}+\frac{W(u_{\e_k}(\cdot,t))}{\e_k}\right]dx\rightarrow d\mu_{t}$ using \cite{Ilmanen1993}.
\begin{claim}\label{NonEmptyLimitFlow}
Under the entropy assumption $\lambda(d\mu_\e)\leq2\alpha-\delta$, we have the convergence  $d\mu_{\e_k,t}=\left[\frac{\e_k|\nabla u_{\e_k}(\cdot,t)|^2}{2}+\frac{W(u_{\e_k}(\cdot,t))}{\e_k}\right]dx\rightarrow d\mu_{t}, \forall t\in(-r^2,r^2)$. Moreover $d\mu_t$ is non-empty for every $t\in(-r^2,r^2)$.
\end{claim}
\begin{proof}[Proof of Claim \ref{NonEmptyLimitFlow}]
To prove the claim, we will use a proof by contradiction. Suppose for the sake of contradiction that the claim is false. Then there exists some time $t_{extinct} \in (-r^2, r^2)$ such that the flow is empty for all time $t\in (t_{extinct},r^2)$. So on the compact subset of space-time $B_{2R_0}\times(t_{extinct},r^2)\subset\mathbb R^2\times\mathbb R$, we have uniform convergence (\cite{Ilmanen1993})
	\begin{align}\label{UniformConvergenceAfterExtinction}
	u_{\e_k}\rightarrow 1, \text{ or} -1.
	\end{align}
On the other hand, by the condition on representing phase transition, for any $t_0\in(t_{extinct},r^2)$ and $\e_i>0$, there exists $x_{0,\e_i}\in B_{R_0}$ such that
	\begin{align*}
	u_{\e_k}(x_{0,\e_k},t_0)=0.
	\end{align*}
This gives a contradiction to the uniform convergence \eqref{UniformConvergenceAfterExtinction} and hence the limit Brakke flow is non-empty for any time $t\in\mathbb (-r^2,r^2)$. 

By \cite[5.5]{Ilmanen1993} (c.f. \cite[Proposition 3.4]{Sato2008}), we have convergence for every $t\in\mathbb (-r^2,r^2)$.

The smoothness follows from the entropy condition because the only singularity model in this case is the round shrinking circle for curve shortening flow where the flow becomes extinct.
\end{proof}
We next prove that the gradient is non-zero at $(0,0)$. In fact, there is a lower gradient bound for points in the nodal sets.
\begin{claim}\label{GradientLowerBound2d}
There exists a $c_\delta>0$ depending on the $\delta$ in Corollary \ref{CurvatureEstimates} and independent of $\e$ such that if $u_\e$ satisfies the conditions in Corollary \ref{CurvatureEstimates}, then $\e|\nabla u_\e(0,0)|\geq c_\delta>0$ for all small enough $\e$.
\end{claim}
\begin{proof}[Proof of Claim \ref{GradientLowerBound2d}]
Suppose not, then there exists a sequence of $u_{\e_k}$ satisfying the equation \eqref{EAC} with $\e=\e_k$ and $\e_k\rightarrow0$ such that $u_{\e_k}(0,0)=0$ and $\e_k|\nabla u_{\e_k}(0,0)|\rightarrow0$. we consider a further rescaled sequence 
\begin{align*}
\tilde u^k(x,t)=u_{\e_k}\left(\e_k(x-\tilde x_k),\e_k^2(t-\tilde t_k)^2\right)
\end{align*} so that $\tilde u^k$ satisfies the equation \eqref{AC} for $t\in(-\frac{r^2}{\e_k^2}, \frac{r^2}{\e_k^2})\rightarrow(-\infty,\infty)$ for each $k$.

By parabolic regularity, we have $\tilde u^k\rightarrow\tilde u^\infty$ smoothly on compact subsets after passing to a subsequence. And thus
\begin{align}\label{ContradictingPropertyLimit2d}
\tilde u^\infty(0,0)=0, |\nabla u^\infty(0,0)|=0.
\end{align}



Claim \ref{NonEmptyLimitFlow} and Item (2) in Theorem \ref{CurvatureEstimates}  then guarantees the condition in Theorem \eqref{ParabolicRigidity2D} is satisfied, so $\tilde u^\infty$ is a 1-d solution up to a rotation and translation in space-time. By the property of such 1-d solutions we have $|\nabla\tilde u^\infty(0,0)|\neq0$, which is a contradiction to \eqref{ContradictingPropertyLimit2d}.

\end{proof}
By the Claim \ref{GradientLowerBound2d}, the enhanced second fundamental form is well defined. Now we prove that it has a uniform bound at $(0,0)$ in space-time. Suppose not, then we have
\begin{equation}\label{CurvatureBlowUp}
|\mathcal A(u_{\e_k}(0,0))|=C_k\geq 4\frac{k}{r}\rightarrow\infty.
\end{equation}
There are 2 cases.

Case 1: If $|\mathcal A( u_{\e_k}(x,t))|\leq 8\frac{k}{r}, \forall (x,t)\in \{u_{\e_k}=0\}\cap\left(B_r(0)\times[-r^2,r^2]\setminus B_\frac{r}{2}(0)\times[-\frac{r^2}{4},\frac{r^2}{4}]\right)$, we choose a point $(\tilde x_k,\tilde t_k)\in \{u_{\e_k}=0\}\cap B_\frac{r}{2}(0)\times[-\frac{r^2}{4},\frac{r^2}{4}]$ such that 
       \begin{align*}
       \sup_{(x,t)\in\{u_{\e_k}=0\}\cap B_\frac{r}{2}(0)\times[-\frac{r^2}{4},\frac{r^2}{4}])}|\mathcal A( u_{\e_k}(x,t))| = |\mathcal A( u_{\e_k}(\tilde x_k,\tilde t_k))|.
       \end{align*}
In this case, we have $|\mathcal A( u_{\e_k}(x,t))|\leq 8\frac{k}{r}\leq 2 |\mathcal A( u_{\e_k}(0,0))|\leq 2 |\mathcal A( u_{\e_k}(\tilde x_k,\tilde t_k))|$ for all $(x,t)\in\{u_{\e_k}=0\}\cap B_r(0)\times[-r^2,r^2]$. 

Case 2: If there exists $(x_k,t_k)\in\{u_{\e_k}=0\}\cap \left(B_r(0)\times[-r^2,r^2]\setminus B_\frac{r}{2}(0)\times[-\frac{r^2}{4},\frac{r^2}{4}]\right)$ such that $|\mathcal A( u_{\e_k}(x_k,t_k))|> 8\frac{k}{r}$. Then we have
\begin{align*}
|\mathcal A( u_{\e_k}(x_k,t_k))|\cdot \mathrm{dist}_P\left((x_k,t_k),\{u_{\e_k}=0\}\cap B_\frac{r}{4}(0)\times[-\frac{r^2}{16},\frac{r^2}{16}]\right)>8\frac{k}{r}\cdot\frac{r}{4}=2k.
\end{align*}
By applying the doubling Lemma \cite[Lemma 5.1]{Polacik2007} with 
\begin{align*}
&X=\Sigma=\{u_{\e_k}=0\}\cap \overline{B_r(0)}\times[-r^2,r^2],\\
&D=\{u_{\e_k}=0\}\cap \left(\overline{B_r(0)}\times[-r^2,r^2]\setminus \overline{B_\frac{r}{4}(0)}\times[-\frac{r^2}{16},\frac{r^2}{16}]\right),\\
&\Gamma=\{u_{\e_k}=0\}\cap \overline{B_\frac{r}{4}(0)}\times[-\frac{r^2}{16},\frac{r^2}{16}],
\end{align*}
we can pick a point $(\tilde x_k,\tilde t_k)\in D$ with the property that
\begin{align*}
&|\mathcal A( u_{\e_k}(\tilde x_k,\tilde t_k))|\geq |\mathcal A( u_{\e_k}(x_k,t_k))|,\\
&|\mathcal A( u_{\e_k}(x,t))|\leq 2|\mathcal A( u_{\e_k}(\tilde x_k,\tilde t_k))|, \forall (x,t)\in\{|x-\tilde x_k|\leq\frac{k}{|\mathcal A( u_{\e_k}(\tilde x_k,\tilde t_k))|}, |t-\tilde t_k|\leq \frac{k^2}{|\mathcal A( u_{\e_k}(\tilde x_k,\tilde t_k))|^2}\}.
\end{align*}
We denote by $\tilde C_k=|\mathcal A( u_{\e_k}(\tilde x_k,\tilde t_k))|$ for either cases above and define
        \begin{align*}
	\tilde u_k(x,t)=u_{\e_k}\left(\frac{x-\tilde x_k}{\tilde C_k},\frac{t-\tilde t_k}{\tilde C_k^2}\right).
	\end{align*}
where $u_k$ satisfies \eqref{EAC} with $\e=\tilde \e_k=\e_k \tilde C_k$.
	

In both cases, by scaling, the function $\tilde u_k$ satisfies
	\begin{align*}
	&|\mathcal A(\tilde u_k(0,0))|=1,\\
	&|\mathcal A(\tilde u_k(x,y))|\leq2, \forall (x,y)\in B_{k}\times[-k^2, k^2].
	\end{align*}
Again, we separate into $2$ cases.

If $\lim_{k\rightarrow \infty}\e_k\tilde C_k=0$, then after passing to a subsequence, we get a limit flow $d\tilde\mu_{t}$ defined for all $t\in[-k^2,k^2]\rightarrow(-\infty,\infty)$. Moreover, since $\tilde u_k$ is a parabolic rescaling of $u_{\e_k}$ by $\frac{1}{\tilde C_i}$, the limit $d\tilde\mu_{t}$ of the energy measure of $\tilde u_i$ is a scaling limit of the energy measure $d\mu_{t}$ by $\frac{1}{C_i}$. And we know that $d\mu_{t}$ is smooth by Claim \ref{NonEmptyLimitFlow}, so
	\begin{align*}
	\frac{1}{\alpha}d\tilde\mu_{t}=\mathcal H^2\lfloor P, \forall t\in\mathbb R
	\end{align*}
where $P\subset\mathbb R^2$ is a static hyperplane, which is the tangent plane at a smooth point $(0,0)$. By Theorem \ref{ImprovementRegularity}, we obtain uniform $C^{2,\theta}$ bounds for sufficiently large $i$, implying that the second fundamental form is preserved in the limit. Consequently, if $\mathcal A(\tilde u_k(0,0))$ tends to $1$ as $k\to \infty$, the limit must have non-zero second fundamental form at the origin, which contradicts the fact that the limit is flat.


If $\lim\sup\e_k\tilde C_k\geq\tilde\e_0>0$, we again consider the rescaled sequence as in the proof of Claim \ref{GradientLowerBound2d}
\begin{align*}
\tilde u^k(x,t)=u_{\e_k}\left(\e_k(x-\tilde x_k),\e_k^2(t-\tilde t_k)^2\right)
\end{align*} so that $\tilde u^k$ satisfies the equation \eqref{AC} for each $k$ and that 
\begin{align}\label{CurvatureLowerBoundContradiction}
|\mathcal A(\tilde u^{k}(0,0))|\geq\tilde\e_0>0.
\end{align} 
Again by Claim \ref{NonEmptyLimitFlow}, the condition in Theorem \ref{ParabolicRigidity2D} is satisfied. We obtain after passing to a subsequence a limit solution $\tilde u^\infty$ which is the 1-d heteroclinic solution. The convergence of $\tilde u^k$ to $\tilde u^\infty$ is smooth on compact subsets of space-time by parabolic regularity. This gives a contradiction to the curvature lower bound \eqref{CurvatureLowerBoundContradiction} for large enough $k$ because $\tilde u^\infty$ has flat level sets.

\end{proof}

\section{Appendix}\label{Appendix}
Here we prove some regularity properties (Lemma \ref{LemmaEstimateDifferenceToFlatSolution} and Lemma \ref{LemmaErrorInZ}) of $u$ and its level sets which are standard results from the theory of semilinear parabolic PDEs.

First we prove  an a priori lower bound for the gradients of parabolic Allen--Cahn at points of phase transition given the assumptions of Theorem \ref{ImprovementRegularity}. The enhanced second fundamental form
	\begin{align*}
	\mathcal A(u)=\frac{\sqrt{|\nabla^2 u|^2-|\nabla|\nabla u||^2}}{|\nabla u|}
	\end{align*}
 is well defined only if the gradient does not vanish.
\begin{thm}\label{GradientUpperLowerBound}
For any $r>0$, there exists $\tilde c_r>0$ such that the following holds: if $u_\e$ is a solution of equation \eqref{EAC} in $B_{r}^{n+1}(x_0)\times(t_0-r^2, t_0+r^2)\subset\mathbb R^{n+1}\times\mathbb R$ with the following properties
\begin{enumerate}
\item $u_\e(x_0,t_0)=0$;
\item $\mathcal A_{u_\e}\leq C_0$, where $C_0$ is independent of $\e$.
\end{enumerate}
Then
\begin{equation*}
\frac{1}{\tilde c_r}\leq\e|\nabla u_\e(x_0,t_0)|\leq \tilde c_r
\end{equation*}
in $B_{\e r}^{n+1}(x_0)\times(t_0-\e^2r^2, t_0+\e^2r^2)\subset\mathbb R^{n+1}\times\mathbb R$  for all sufficiently small $\e$.
\end{thm}
\begin{proof}
Without loss of generality we can assume $(x_0,t_0)=(0,0)$.

Suppose the conclusion does not hold, then there exists a sequence of $\e_i\rightarrow0$, a sequence of solutions $u_{\e_i}$ to the equation \eqref{EAC} in $B^{n+1}_{r}\times(-r^2,r^2)\subset\mathbb R^{n+1}\times\mathbb R$ with $\e=\e_i$ and $u_{\e_i}(0,0)=0$, but such that either
\begin{equation*}
\e_i|\nabla u_{\e_i}(x_i,t_i)|\rightarrow0,
\end{equation*}
or
\begin{equation*}
\e_i|\nabla u_{\e_i}(x_i,t_i)|\rightarrow\infty,
\end{equation*}
for some $(x_i,t_i))\in B^{n+1}_{\e_ir}\times(-\e_i^2r^2,\e_i^2r^2)\subset\mathbb R^{n+1}\times\mathbb R$.

By scaling, we obtain a sequence of solutions $u^{\e_i}=u_{\e_i}(\e_i x,\e_i^2t)$ satisfying equation \eqref{AC} in $B_{\frac{r}{\e_i}}\times(-\frac{r^2}{\e_i^2},\frac{r^2}{\e_i^2})\subset\mathbb R^{n+1}\times\mathbb R$ with $u^{\e_i}(0,0)=0$ and
\begin{equation}\label{ContradictionGradient0}
|\nabla u^{\e_i}(\tilde x_i,\tilde t_i)|\rightarrow0,
\end{equation}
or
\begin{equation}\label{ContradictionGradientInfty}
|\nabla u_{\e_i}(\tilde x_i,\tilde t_i)|\rightarrow\infty,
\end{equation}
for some $(\tilde x_i,\tilde t_i))\in B_{r}\times(-r^2,r^2)$.

Notice that by condition (2) the second fundamental forms of the level sets of $u^\e$ are bounded above by $C_0\e\rightarrow0$. Since the level sets are getting flatter and flatter as $\e\rightarrow0$, after passing to a limit, the sequence converges smoothly on any compact subsets to a limit $u^\infty$ of \eqref{AC} that is defined on the whole space-time $\mathbb R^{n+1}\times\mathbb R$. Since the set of critical values are of measure zero by Sard's Theorem, we have that for almost every $s\in[-1,1]$ the level sets $\{u^\infty=s\}$ are flat hyper planes. This tells that $u^\infty$ must be the static $1$-d solution $g$. Moreover, $u^\infty(0,0)=\lim u^{\e_i}(0,0)=0$. By the assumption on entropy bound, $u^\infty$ cannot be the constant solution with value 0, and thus it has to be the 1-d heteroclinic solution with non-zero spatial gradient at $(0,0)$. This gives a contradiction with either \eqref{ContradictionGradient0} or \eqref{ContradictionGradientInfty} for large enough $i$, and thus we must have a gradient upper and lower bound.
\end{proof}
Now we are ready to prove the 2 regularity lemmas of level sets in section 4.
\begin{proof}[Proof of Lemma \ref{LemmaEstimateDifferenceToFlatSolution}]
Note that as described in subsection \ref{Subsection_Fermi}, the curvature bound $|\mathcal A^\e|\leq C_0\e$ gives that the nodal set $\Gamma^\e_{0,-(r')^2}$ of $u^\e$ at time $-(r'^2)$ is $C_0\e$ close in $C^2$ to a flat hyperplane, which without loss of generality is assumed to be $\{x_{n+1}=0\}$.

Both $u^\e$ and $\tilde g$ satisfies equation \eqref{AC}, so their difference satisfies
	\begin{align}\label{eqn_DifferenceToFlatSolution}
	(\frac{\partial}{\partial t}-\Delta)(u^\e-\tilde g)=-\left[W'(u^\e)-W'(\tilde g)\right].
	\end{align}
We have the following $L^\infty$ bound of the right hand side.
\begin{claim}\label{LInfinityRHS}
	\begin{align*}
	\|W'(u^\e)-W'(\tilde g)\|_{L^\infty (B_{r'}(x_0,0)\times I_{(r')^2})}=O(\e)+O\left(\|\phi^\e\|_{C^{2,\theta}(B_{r'}(x_0,0)\times I_{(r')^2})}\right),
	\end{align*}
	where $r''=\frac{r+r'}{2}$.
\end{claim}
\begin{proof}[Proof of Claim \ref{LInfinityRHS}]
The proof requires the use of auxiliary approximation solution $v_\e$ in \eqref{ApproximateSolution_v} and some estimates from \cite{trumper2008relaxation}.

Let $\Sigma_t$ be the limit mean curvature flow with curvature. We note here that by the level sets have locally uniformly bounded curvature, the limit flow must be locally uniformly bounded as $C^{1,\alpha}$ graphs. Since the mean curvature flow equation of graphs are quasi-linear PDEs, standard theory then implies that all the derivatives are locally uniformly bounded. In particular, the limit flow must have curvature locally uniformly bounded by $C_0$. Let $d_{\Sigma_t}:\mathbb R^{n+1}\rightarrow\mathbb R$ denote the signed distance to $\Sigma_t$. The sign of $d_{\Sigma_t}$ is chosen to agree with the sign of the distance $d_{\e,t}$ to the nodal sets $\Gamma_{\e,t}$ as $\e\rightarrow0$.

Following \cite[(43)]{trumper2008relaxation}, we define an approximate solution $v_\e$ as follows:
	\begin{align}\label{ApproximateSolution_v}
	v_\e(x,t)=\chi_{\bar\tau}(d_{\Sigma_t}(x))\bar g(\frac{d_{\Sigma_t}(x)}{\e})+(1-\chi_{\bar\tau}(d_{\Sigma_t}(x)))\frac{d_{\Sigma_t}(x)}{|d_{\Sigma_t}(x)|},
	\end{align}
where $\chi_{\bar\tau}$ is a $C^\infty$ function supported in $[-\bar\tau,\bar\tau]$, $\chi_{\bar\tau}\equiv1$ in $[-\frac{\bar\tau}{2},\frac{\bar\tau}{2}]$, and $-1\leq\chi_{\bar\tau}\leq1$. Here $\tau$ is chosen small enough so that the distance function $d_{\Sigma_t}$ to the flowing hypersurfaces $\Sigma_t$ are well defined.

And in accordance with our notations, we denote its parabolic rescaling by
	\begin{align*}
	v^\e(x,t)=v_\e(\e x,\e^2t).
	\end{align*}
Since all the nodal sets $\Gamma_{\e,t}$ pass through $(0,0)\in\mathbb R^{n+1}\times\mathbb R$, so does the limit limit mean curvature flow $\Sigma_t$. By the uniform curvature bound of $\Sigma_t$, after rescaling, the flows $\frac{\Sigma_t}{\e}$ have curvature bounded by $O(\e)$ and converges to a flat plane which we denote by $\{x_{n+1}=0\}$ without loss of generality. And we must have for each $\e$, the flow must be $O(\e)$ close in $C^2$ to the plane $\{x_{n+1}=0\}$ since they all pass through $(0,0)$. And thus
	\begin{align}\label{DifferenceVandG}
	|v^\e-\tilde g|=|\bar g\circ d_{\frac{\Sigma_t}{\e}}-\bar g\circ\mathrm{dist}_{x_{n+1}=0}|+O(\e)=O(\e).
	\end{align}
At time $-(r'^2)$, since the nodal set $\Gamma^\e_{0,-(r')^2}$ of $u^\e$ is $C_0\e$ close in $C^2$ to $\{x_{n+1}=0\}$, we have
	\begin{align}\label{estimate_u_minus_v_timeslice}
	&|u^\e\left(y,-(r')^2\right)-v^\e\left(y,-(r')^2\right)|\\\nonumber
&\leq|u^\e\left(y,-(r')^2\right)-g^{\e,*}\left(y,-(r')^2\right)|+|g^{\e,*}\left(y,-(r')^2\right)-\bar g(y)|+|\bar g-v^\e|\\\nonumber
&\leq\|\phi^\e\|_{L^\infty(B_{r'}(x_0,0)\times I_{(r')^2})}+\| g\circ\mathrm{dist}_{\Gamma^\e_{0,t}}-g\circ\mathrm{dist}_{\{x_{n+1}=0\}}\|_{L^\infty (B_{r'}(x_0,0)\times I_{(r')^2})}+O(\e)\\\nonumber
&\leq\|\phi^\e\|_{C^{2,\theta}(B_{r'}(x_0,0)\times I_{(r')^2})}+O(\e).
	\end{align}
By the bounds \cite[(45),(46)]{trumper2008relaxation} and parabolic rescaling, we have
	\begin{align}\label{estimate_heat_operator_v}
	\left|(\frac{\partial}{\partial t}-\Delta)v^\e+W'(v^\e)\right|&=\e^2\left|(\frac{\partial}{\partial t}-\Delta)v_\e+\frac{W'(v_\e)}{\e^2}\right|\\\nonumber
&\leq \e^2\cdot C\frac{1}{\e}=O(\e).
	\end{align}
(Note that in the representation in \cite{trumper2008relaxation} the potential function is chosen to be $\frac{W}{2}$ instead of $W$, but this choice does not affect the estimates).

The final estimate we require is a reformulation of \cite[(35)]{trumper2008relaxation}.
\begin{claim}\label{Trumper}
The difference of $u^\e$ and $v^\e$ is bounded by 
	\begin{align}\label{TrumperEstimate}
	&\sup_{B_{r'}(x_0,0)\times I_{(r')^2}}|u^\e-v^\e|(x,t)\\\nonumber
	&\leq C\sup_{B_{r'}(x_0,0)\times I_{(r')^2}}\int_{-(r')^2}^t\int_{\mathbb R^{n+1}}\mathcal H(x-y,t-s)\left| \left(\frac{\partial}{\partial t}-\Delta\right)v^\e+W'(v^\e)\right|(y,x)dyds\\\nonumber
	&+C\sup_{B_{r'}(x_0,0)}\int_{\mathbb R^{n+1}}\mathcal H(x-y,-(r')^2)|u^\e\left(y,-(r')^2\right)-v^\e\left(y,-(r')^2\right)|dy,
	\end{align}
where $\mathcal H$ is the heat kernel in $\mathbb R^{n+1}$.
\end{claim}
\begin{rem}
We note that the main estimate (19) of \cite[Theorem 3.1]{trumper2008relaxation} is independent of $\e$, but requires the condition that $|u^\e-v^\e|\nrightarrow0$. Here in the claim, we instead use the estimate (35) in the proof of Theorem 3.1 in \cite{trumper2008relaxation}, in which the constant $C$ may depend on $\e$. However, this does not require $|u^\e-v^\e|\nrightarrow0$.  It suffices for our purpose because we are applying it to the rescaled functions $u^\e, v^\e$, which are solutions and approximate solutions respectively to the equation \eqref{AC} with $\e=1$.

\end{rem}
Combining \eqref{estimate_u_minus_v_timeslice}, \eqref{estimate_heat_operator_v} and \eqref{TrumperEstimate}, we have
	\begin{align*}
	&\sup_{B_{r'}(x_0,0)\times I_{(r')^2}}|u^\e-v^\e|(x,t)\\
	&\leq C\e\sup_{B_{r'}(x_0,0)\times I_{(r')^2}}\int_{-(r')^2}^t\int_{\mathbb R^{n+1}}dyds\\
	&+C\left(\e+\|\phi^\e\|_{C^{2,\theta}(B_{r'}(x_0,0)\times I_{(r')^2})}\right)\sup_{B_{r'}(x_0,0)\times I_{(r')^2}}\int_{\mathbb R^{n+1}}\mathcal H(x-y,t)dy\\
	&=O(\e)+O\left(\|\phi^\e\|_{C^{2,\theta}(B_{r'}(x_0,0)\times I_{(r')^2})}\right).
	\end{align*}
Now the conclusion of the claim follows by
	\begin{align*}
	&\|W'(u^\e)-W'(\tilde g)\|_{L^\infty (B_{r'}(x_0,0)\times I_{(r')^2})}\\
	&\leq C\|u^\e-\tilde g\|_{L^\infty (B_{r'}(x_0,0)\times I_{(r')^2})}\\
	&\leq C\|u^\e-v^\e\|_{L^\infty (B_{r'}(x_0,0)\times I_{(r')^2})}+C\|v^\e-\tilde g\|_{L^\infty (B_{r'}(x_0,0)\times I_{(r')^2})}\\
	&=O(\e)+O(\|\phi^\e\|_{C^{2,\theta}(B_{r'}(x_0,0)\times I_{(r')^2})}),
	\end{align*}
(where we used that the potential function satisfies $|W''(t)|\leq C$ for $t\in[-1,1]$)
\end{proof}

Using Claim \ref{LInfinityRHS}, we have by parabolic Calderon--Zygmund estimates (c.f. \cite[Theorem 6]{Wang2003}) that in the ball of radius $r''=\frac{r+r'}{2}$.
	\begin{align*}
	\|u^\e-\tilde g\|_{W^{2,p}(B_{r''}(x_0,0)\times I_{(r'')^2})}&\leq\|W'(u^\e)-W'(\tilde g)\|_{L^p (B_{r'}(x_0,0)\times I_{(r')^2})}\\
	&\leq C\|W'(u^\e)-W'(\tilde g)\|_{L^\infty(B_{r'}(x_0,0)\times I_{(r')^2})}\\
	&\leq O(\e)+O(\|\phi^\e\|_{C^{2,\theta}(B_{r'}(x_0,0)\times I_{(r')^2})}),
	\end{align*}
for any $1<p<\infty$. The Sobolev inequalities then give
	\begin{align*}
	\|u^\e-\tilde g\|_{C^{1,\theta}(B_{r''}(x_0,0)\times I_{(r'')^2})}&\leq C\|u^\e-\tilde g\|_{W^{2,\frac{n+1}{1-\theta}}(B_{r''}(x_0,0)\times I_{(r'')^2})}\\
	&\leq O(\e)+O(\|\phi^\e\|_{C^{2,\theta}(B_{r'}(x_0,0)\times I_{(r')^2})}),
	\end{align*}
for any $\theta\in(0,1)$ and so
	\begin{align*}
	\|W'(u^\e)-W'(\tilde g)\|_{C^{1,\theta}(B_{r''}(x_0,0)\times I_{(r'')^2})}&\leq O(\e)+O(\|\phi^\e\|_{C^{2,\theta}(B_{r'}(x_0,0)\times I_{(r')^2})}),
	\end{align*}
since $W'$ is a polynomial.

Applying parabolic Schauder estimates of Lemma \ref{ParabolicSchauder} to the equation \eqref{eqn_DifferenceToFlatSolution} and bootstrapping then gives the estimate in the ball of radius $r$.
\end{proof}

\begin{proof}[Proof of Lemma \ref{LemmaErrorInZ}]
\eqref{CurvatureError} and \eqref{LaplacianError} are already proved in \cite[Section 8]{Wang2019a}, we only need to prove the remaining two estimates.

By our assumptions, we can assume without loss of generality that for sufficiently small $\e$, the level sets for values $(-1+b,1-b), b>0$ are graphs over $\{x_{n+1}=0\}$, that is $\{u^\e=s\}\cap \left\{D_{r'}\times I_{(r')^2}\right\}=\{x_{n+1}=h^{\e,s}(x_1,...,x_n,t)\}\cap \left\{D_{r'}\times I_{(r')^2}\right\}$ for $s\in(-1+b,1-b)$. We compute
	\begin{align*}
	&\frac{\partial u^\e}{\partial x_{n+1}}=\left(\frac{\partial h^{\e,s}}{\partial s}\right)^{-1},\\
	&\frac{\partial u^\e}{\partial x_i}=-\left(\frac{\partial h^{\e,s}}{\partial s}\right)^{-1}\frac{\partial h^{\e,s}}{\partial x_i},i=1,...,n,\\
	&\frac{\partial u^\e}{\partial t}=-\left(\frac{\partial h^{\e,s}}{\partial s}\right)^{-1}\frac{\partial h^{\e,s}}{\partial t}.
	\end{align*}
By a similar argument to Theorem \ref{GradientUpperLowerBound}, we have $|\frac{\partial u^\e}{\partial x_{n+1}}|>C^{-1}$ for some $C>0$ and thus $\frac{\partial h^{\e,s}}{\partial s}$ in the above computations makes sense.

For simplicity in the rest of the proof, we omit the superscripts $\e$ and $s$ in $u^\e$, $\phi^\e$ and $h^{\e,s}$ when there is no confusion.

By Lemma \ref{LemmaEstimateDifferenceToFlatSolution}, we see
	\begin{align*}
	\left|\frac{\partial h}{\partial t}\right|=O\left(\left|\frac{\partial u}{\partial t}\right|\right)=O(\e)+O(\|\phi^\e\|_{C^{2,\theta}(B_{r'}\times I_{(r')^2})}).
	\end{align*}
We further compute higher order derivatives of $h$ that contain one order of $t$ derivatives. By differentiating $u\left(x_1,...,x_n,h(x_1,...,x_n,t),t\right)=s$ twice, we have
	\begin{align*}
	\frac{\partial^2u}{\partial x_i\partial t}+\frac{\partial^2u}{\partial x_{n+1}\partial t}\frac{\partial h}{\partial x_i}+\frac{\partial^2h}{\partial x_i\partial t}\frac{\partial u}{\partial x_{n+1}}=0.
	\end{align*}
Since the term $\frac{\partial u}{\partial x_{n+1}}$ is uniformly bounded, and the first $2$ terms are of order $O(\e)+O(\|\phi^\e\|_{C^{2,\theta}(B_{r'}\times I_{(r')^2})})$, we see
	\begin{align*}
	\left|\frac{\partial^2 h^s}{\partial x_i\partial t}\right|=O(\e)+O(\|\phi^\e\|_{C^{2,\theta}(B_{r'}\times I_{(r')^2})}).
	\end{align*}
Similarly, to estimate $\left|\frac{\partial^3 h}{\partial x_i\partial x_j\partial t}\right|$, we differentiate again with respect to $x_j$ and get
	\begin{align*}
	&\frac{\partial^3u}{\partial x_i\partial x_j\partial t}+\frac{\partial^3u}{\partial x_{n+1}\partial x_j\partial t}\frac{\partial h}{\partial x_i}+\frac{\partial^2u}{\partial x_{n+1}\partial t}\frac{\partial^2 h}{\partial x_i\partial x_j}\\
	&+\frac{\partial^3h}{\partial x_i\partial x_j\partial t}\frac{\partial u}{\partial x_{n+1}}+\frac{\partial^2h}{\partial x_i\partial t}\frac{\partial^2 u}{\partial x_{n+1}\partial x_j} =0.
	\end{align*}
By the previous first and second order derivatives estimates (that contains $t$ derivatives), we see the first, second, third and fifth term are all of order $O(\e)+O(||\phi||_{C^{2,\theta}\left(B_{r'}\times I_{(r')^2}\right)})$. So from the fourth term, we have
	\begin{align*}
	\left|\frac{\partial^3h}{\partial x_i\partial x_j\partial t}\right|=O(\e)+O(\|\phi^\e\|_{C^{2,\theta}(B_{r'}\times I_{(r')^2})}).
	\end{align*}
Since the second fundamental form of the nodal sets $\Gamma^\e_t$ in graphical form is
	\begin{align*}
	A_{ij}(x_1,...,x_n,t)=-\frac{1}{\sqrt{1+|\nabla h^0|^2}}\frac{\partial}{\partial x_i}\left[\frac{1}{\sqrt{1+|\nabla h^0|^2}}\frac{\partial h^0}{\partial x_j}(x_1,...,x_n,t)\right].
	\end{align*}
Taking derivatives with respect to $t$ and using the above estimates of higher order derivatives of $h$ containing one order of $t$ derivatives, we get
	\begin{align*}
	\left|\frac{\partial }{\partial t}A\right|=O(\e)+O(\|\phi^\e\|_{C^{2,\theta}(B_{r'}\times I_{(r')^2})}).
	\end{align*}
And consequently, using \cite[(8.4)]{Wang2019a}, we get
	\begin{align*}
	\frac{\partial}{\partial t}\left(A_{\Gamma^\e_{z,t}}-A_{\Gamma^\e_{0,t}}\right)&=\frac{\partial}{\partial t}\left(zA_{\Gamma^\e_{z,t}}A_{\Gamma^\e_{0,t}}\right)\\
	&=zA_{\Gamma^\e_{z,t}}\frac{\partial}{\partial t}A_{\Gamma^\e_{0,t}}+zA_{\Gamma^\e_{0,t}}\frac{\partial}{\partial t}A_{\Gamma^\e_{z,t}}\\
	&=O(\e^2)+O(\|\phi^\e\|^2_{C^{2,\theta}(B_{r'}\times I_{(r')^2})}),
	\end{align*}
because $|A|,\left|\frac{\partial }{\partial t}A\right|=O(\e)$. This proves \eqref{CurvatureDerivativeError}.

The time derivative of the induced Riemannian metrics on the nodal sets can be estimated by
	\begin{align*}
	\left|\frac{\partial}{\partial t}\left[(g_{\Gamma^\e_{0,t}})_{ij}\right]\right|&=\left|\left[(A_{\Gamma^\e_{0,t}})_{ij}\right]\left\langle\frac{\partial X_{\Gamma^\e_{0,t}}}{\partial t},\nu_{\Gamma^\e_{0,t}}\right\rangle\right|\\
	&\leq O\left(\left|\left[(A_{\Gamma^\e_{0,t}})_{ij}\right]\frac{\partial h}{\partial t}\right|\right)\\
	&=O(\e^2)+O(\|\phi^\e\|^2_{C^{2,\theta}(B_{r'}\times I_{(r')^2})}).
	\end{align*}
And for the metrics on $\Gamma^\e_{z,t}$ for each $z$, we have by\cite[8.2]{Wang2019a}
	\begin{align*}
	(g_{\Gamma^\e_{z,t}})_{ij}=(g_{\Gamma^\e_{0,t}})_{ij}-2z\sum_{k=1}^n(A_{\Gamma^\e_{0,t}})_{ik}(g_{\Gamma^\e_{0,t}})_{jk}+z^2\sum_{k,l=1}^n(g_{\Gamma^\e_{0,t}})_{kl}(A_{\Gamma^\e_{0,t}})_{ik}(A_{\Gamma^\e_{0,t}})_{jl}.
	\end{align*}
Differentiating with respect to $t$ and using the $t$ derivative estimate of $A$ and $g_{\Gamma^\e_{0,t}}$ above, we then see
	\begin{align*}
	\left|\frac{\partial}{\partial t}\left[(g_{\Gamma^\e_{z,t}})_{ij}\right]\right|=O(\e)+O(\|\phi^\e\|_{C^{2,\theta}(B_{r'}\times I_{(r')^2})}).
	\end{align*}
Now by denoting $b^i_{\Gamma^\e_{z,t}}=\frac{1}{2}\sum_{j=1}^ng^{ij}_{\Gamma^\e_{z,t}}\frac{\partial\left(\log\det(g_{\Gamma^\e_{z,t}})_{ij}\right) }{\partial y_j}$, we can estimate the H\"older time norm of the error in $z$ of the Laplacian operator.
	\begin{align*}
	&\sup_{\substack{y\in B_r^n}}\left\|\Delta_{\Gamma^\e_{z,\cdot}}\varphi(y,\cdot)-\Delta_{\Gamma^\e_{0,\cdot}}\varphi(y,\cdot)\right\|_{C^\frac{\theta}{2}(I_{r^2})}\\
	&=\sup_{\substack{y\in B_r^n}}\left\|\sum_{i,j=1}^n\left(g^{ij}_{\Gamma^\e_{z,t}}-g^{ij}_{\Gamma^\e_{0,t}}\right)\frac{\partial^2\varphi}{\partial y_i\partial y_j}(y,\cdot)+\sum_{i=1}^n\left(b^i_{\Gamma^\e_{z,t}}-b^i_{\Gamma^\e_{0,t}}\right)\frac{\partial\varphi}{\partial y_i}(y,\cdot)\right\|_{C^\frac{\theta}{2}(I_{r^2})}\\
	&\leq \sup_{\substack{y\in B_r^n}}\left[O\left(\left|g^{ij}_{\Gamma^\e_{z,t}}-g^{ij}_{\Gamma_{\e,0,t}}\right|\right)\|\varphi\|_{C^{2,\theta}\left(B_r^n\times I_{r^2}\right)}+O\left(\|g_{\Gamma^\e_{z,\cdot}}\|_{C^{0,\frac{\theta}{2}}(I)}\right)\|\varphi\|_{C^2\left(B_r^n\times I_{r^2}\right)}\right]\\
	&\leq \sup_{\substack{y\in B_r^n}}O\left(\left|g^{ij}_{\Gamma^\e_{z,t}}-g^{ij}_{\Gamma^\e_{0,t}}\right|\right)\|\varphi\|_{C^{2,\theta}\left(B_r^n\times I_{r^2}\right)}+\sup_{\substack{y\in B_r^n\\|z| \leq 6|\log \e|}}O\left(\|g_{\Gamma^\e_{z,\cdot}}\|_{C^1(I)}\right)\|\varphi\|_{C^{2,\theta}\left(B_r^n\times I_{r^2}\right)}\\
	&= O(\e^2)+O(\|\phi^\e\|^2_{C^{2,\theta}(B_{r'}\times I_{(r')^2})})+O(\|\varphi\|^2_{C^{2,\theta}\left(B_r^n\times I_{r^2}\right)}),
	\end{align*}
and this proves \eqref{LaplacianHolderError}.
\end{proof}

\bibliography{AllenCahn.bib}
\bibliographystyle{alpha}

\end{document}